\documentclass{article}

\usepackage{microtype}
\usepackage{graphicx}
\usepackage{booktabs} 

\usepackage{hyperref}
\usepackage{lipsum} 
\usepackage{wrapfig}
\usepackage{subcaption}
\usepackage{comment}
\usepackage{hyperref}
\usepackage{url}
\usepackage{amsmath,amssymb,amsfonts}
\usepackage{graphicx}
\usepackage{textcomp}
\usepackage{booktabs}
\usepackage{xcolor} 
\usepackage{epsfig}
\usepackage{tabularx}
\usepackage{graphicx}
\usepackage[ruled,vlined]{algorithm2e}
\usepackage{enumitem}
\usepackage{eufrak}
\usepackage{lipsum}
\usepackage{relsize}
\usepackage{multicol}
\usepackage{multirow}
\usepackage{wrapfig}
\usepackage[hang,flushmargin]{footmisc}
\usepackage[ruled]{algorithm2e}

\usepackage{booktabs}
\usepackage{siunitx}
\usepackage{cancel}
\usepackage{xcolor}
\definecolor{darkgreen}{rgb}{0.00, 0.39, 0.00}
\definecolor{darkred}{rgb}{0.55, 0.00, 0.00}
\definecolor{darkblue}{rgb}{0.05, 0.05, 0.7}

\usepackage[final]{neurips_2024}

\usepackage{mathtools}
\usepackage{amsthm}

\usepackage[capitalize,noabbrev]{cleveref}

\theoremstyle{plain}
\newtheorem{theorem}{Theorem}[section]
\newtheorem{proposition}[theorem]{Proposition}
\newtheorem{lemma}[theorem]{Lemma}
\newtheorem{corollary}[theorem]{Corollary}
\theoremstyle{definition}
\newtheorem{definition}[theorem]{Definition}

\newtheorem{remark}[theorem]{Remark}

\usepackage[textsize=tiny]{todonotes}

\title{A Convergence-Guaranteed Algorithm for
		Stochastic Optimal Control Problems}

\author{ Mohsen Amidzadeh\\
	Department of Computer Science\\
	Aalto University\\
	\texttt{mohsen.amidzade@aalto.fi}
}

\begin{document}

\maketitle

\begin{abstract}
Stochastic Optimal Control Problems (SOCPs) plays a major role in the sequential decision-making challenges.
There exist various iterative algorithms, under framework of stochastic maximum principle, that sequentially
find the optimal control decision.
However, they are based on the adjoint sensitivity analysis that necessitates simulation of an adjoint process 
— typically a backward stochastic differential equation (SDE) that must simultaneously be adapted to a forward filtration and satisfy a terminal condition 
— which substantially increases complexity and exacerbates the curse of dimensionality.
We instead develop a stochastic maximum principle based on the Malliavin calculus,
which enables us to devise an iterative algorithm without need of an adjoint process.
Our algorithm however needs the Malliavin derivative 
that can be efficiently computed based on a forward simulator.
Empirical comparisons against standard iterative algorithms demonstrate 
that our approach alleviates the dimensionality bottleneck 
while delivering competitive performance on the considered SOCPs.

\end{abstract}

\section{Introduction}
Stochastic optimal control problems (SOCPs) stand as a vibrant research domain 
presenting extensive applications across various fields 
such as nonlinear filtering, game theory, and mathematical finance \citep{ma1999forward,zhang2017backward,Zhang2022BSDE,bouchard2018,hientzsch2019}.
These problems are characterized by functional objectives
depending on diffusion processes generated from stochastic differential equations (SDEs).

There are two main frameworks that characterize the optimal solution of SOCPs,
namely Stochastic Maximum Principle (SMP) and Hamilton-Jacobi-Bellman (HJB) equations \citep{yong1999stochastic}.
Under the framework of SMP, there are various iterative algorithms that find the optimal control decision.
\citet{Gong2017Efficient} leverages a gradient projection method to develop an iterative algorithm 
based on the adjoint sensitivity analysis.
Specifically, their approach needs simulation of an adjoint process (or a backward SDE)
that must simultaneously be adapted to a forward filtration and complies with a terminal condition.
This requirement challenges the solution methodology; the available mechanisms need extensive computations 
and suffer from the curse of dimensionality \citep{archibald2020stochastic}.
A remedy for this challenge is to approximate the adjoint process based on a time-reversed computation
\citet{archibald2020stochastic,WangLiu2025}.
Although, this approximating method is computationally effective, 
but it cannot give the exact solution and sometimes needs extra iteration updates to converge.
Conversely, our paper develops a SMP based on the \textbf{Mal}liavin calculus, 
and accordingly a \textbf{g}radient \textbf{pro}jection algorithm, termed as Mal-GPro.
In contrast to the aforementioned approaches,
our algorithm does not need an additional adjoint process, 
and more importantly is accurate from this sense that it does not use any approximation.
Our algorithm manages to replace the adjoint state with the Malliavin derivative of the original SDE 
that can be computed based on a forward simulator.

This paper thus provides the following contributions.
\begin{itemize}[leftmargin=*]
	\item We derive a stochastic maximum principle for vector (and scalar) SOCPs relying on the Malliavin calculus.
	We then derive optimality conditions for the optimal control decision.
	
	\item Based on the developed framework, we devise a gradient projection algorithm
	replacing the adjoint state with a Malliavin derivative,
	that iteratively find the optimal solution for some classes of SOCPs.
	
	\item We benchmark our algorithm against other iterative algorithms
	using numerous experiments, which shows the effectiveness of our algorithm
	from dimensionality bottleneck and performance perspectives.
\end{itemize}

\section{Related Work}
\textbf{Sensitivity analysis-based methods:}
Our work has some connections with the sensitivity analysis works that leverage the Malliavin calculus.
\citet{GobetMunos2005} performs a sensitivity analysis for stochastic objectives 
depending on the final solution of a (controlled) SDE.
\citet{Meyer2012} derives a stochastic maximum principle framework for \emph{scalar} SOCPs based on the Malliavin calculus.
In contrast, we consider more general forms of  vector SOCPs,
and more importantly, we devise an iterative projection algorithm to obtain the optimal control decision.

\textbf{Iterative gradient algorithms:}
Gradient algorithms manage to find the control decision for SOCPs based on the framework of SMP.
A gradient projection method has been developed in \citep{Gong2017Efficient} based on the adjoint backward SDE.
The adjoint state should be adapted to a forward Wiener filtration and follow the terminal condition.
Therefore, the available solutions to simulate this SDE, either mesh-based non-parametric or mesh-free parametric methods, 
suffer from the curse of dimensionality \citep{archibald2020stochastic}.
Accordingly, an approximate solution has been proposed in \citet{archibald2020stochastic,WangLiu2025}
to simulate the backward SDE based on a time-reversed computation.
However, the solution exhibits an approximation which suffers from the performance drop. 
In contrast,
our proposed algorithm replaces the backward adjoint SDE with a Malliavin derivative 
that does not require backward simulation.
More importantly, it does not rely on any approximation to simulate the Mallaivin derivative.

\section{Background}

\subsection{Variational Analysis of SOCPs}\label{Sec:variational-introduction}
In this paper, we consider the filtered probability space $(\Omega,\mathcal{F},\mathcal{P})$,
where $\Omega$ is the sample space, 
$\mathcal{F}$ the $\sigma$-algebra event space,
and $\mathcal{P}:\mathcal{F} \to [0,1]$ the probability measure.
A $l$-dimensional Wiener process $\textbf{W}=\{w_t^l\}_{l=1}^d$ with diffusion matrix $\textbf{Q}=[q_{i,j}]_{i,j}$ is then defined
with the natural filtration $\mathcal{F}_t \subseteq \mathcal{F}$ that encodes all information up to time $t$.

We now consider the following controlled  SDE:
\begin{align}\label{EQ:FBSDE}
	\!\!\!	
	\begin{cases}
		d\textbf{x}_t^\textbf{u} = \boldsymbol{a}(\textbf{x}_t^\textbf{u}, \textbf{u}_t) \: dt 
		+ \sum_{i=1}^l \boldsymbol{b}_i(\textbf{x}_t^\textbf{u}, \textbf{u}_t) \: dw_t^i, \\
		\textbf{x}_0~\text{given}
	\end{cases}	
\end{align}
where $\textbf{x}_t^\textbf{u} \in \mathbb{R}^n$ is the process, 
$\boldsymbol{a}:  \mathbb{R}^n \times U  \to \mathbb{R}^n$ is the drift function 
which describes the dynamics of the SDE and is governed by a $\mathcal{F}_t$-adapted control process $\textbf{u}_t \in U$,
with $U$ an open convex set in $\mathbb{R}^k$,
$\boldsymbol{b}_i:  \mathbb{R}^n \times U  \to \mathbb{R}^n$ is the diffusion function
determining the extent of uncertainty added to the dynamics,
and $\textbf{B}=[\textbf{b}_1,...,\textbf{b}_d]$.
Hereafter, we may use the notation $\textbf{x}_t$, instead of $\textbf{x}_t^\textbf{u}$, for the simplicity. 
A cost functional pertaining to this SDE is then considered as follows:
\begin{align}\label{EQ:functional}
	\hspace{-8 pt}	J(\textbf{u},\textbf{x}_0) = \mathbb{E}\Big\{ \int_0^T L(\textbf{x}_t^\textbf{u}, \textbf{u}_t, t)\:dt + h(\textbf{x}_T^\textbf{u}) \Big\},
\end{align}
where 
$L:\mathbb{R}^n \times U \times [0,T] \to \mathbb{R}$
and $h^f:\mathbb{R}^n \to \mathbb{R}$ are some measurable functions.
The stochastic control problem of interest is then formulated as:
\begin{align}\label{EQ:StochasticControl}
	\mbox{P}_1:~ &\max_{\textbf{u} \in U} ~ J(\textbf{u},\textbf{x}_0) \notag \\
	&\quad \text{s.t.~~~} d\textbf{x}_t = \boldsymbol{a}(\textbf{x}_t, \textbf{u}_t) \: dt 
	+ \sum_{i=1}^l \boldsymbol{b}_i(\textbf{x}_t, \textbf{u}_t) \: dw_t^i.
\end{align}
A perturbation analysis is then employed 
to find the optimal control of SOCP~$\mbox{P}_1$.
For this, the control $\textbf{u}_t\in U$ is  perturbed through $\textbf{u}_t + \epsilon \textbf{v}_t$,
and then the variation of functional~(\ref{EQ:functional}) is analyzed along direction $\textbf{v}_t$ based on the following definition.
\begin{definition}\label{Def1}
	The variation of functional $J(\textbf{u},\textbf{x}_0)$ along direction $\textbf{v}_t$ is:
	\begin{align*}
	\delta_\textbf{v}J(\textbf{u},\textbf{x}_0) = \lim_{\epsilon\to 0}\frac{1}{\epsilon}\left(J(\textbf{u}+ \epsilon \textbf{v},\textbf{x}_0) - J(\textbf{u},\textbf{x}_0)\right)
\end{align*}
\end{definition}
This definition shows how the functional $J(\textbf{u}(\cdot),\cdot)$ evolves by slightly perturbing the process $\textbf{u}(\cdot)$.
Using this definition, one can have:
\begin{proposition}\label{Propos:Variation}
	Consider SOCP $\mbox{P}_1$ with the cost functional $J(\textbf{u},\textbf{x}_0)$~(\ref{EQ:functional}),
	its variation along the direction $\textbf{v}(\cdot)$ is obtained by:
	\begin{align*}
		\delta_{\textbf{v}} J(\textbf{u},\textbf{x}_0) = \mathbb{E}\left\{ \int_0^T H_{\textbf{u}}\big(\textbf{x}, \textbf{y}, \textbf{u}, \textbf{Z}, t\big)^\top\textbf{v}_t dt \right\},
	\end{align*}
	where $ H_{\textbf{u}}$ is the gradient of the Hamiltonian $H$ w.r.t. $\textbf{u}_t$ with 
	\begin{align*}
		H\big(\textbf{x}, \textbf{y}, \textbf{u}, \textbf{Z}, t\big)  :=  
		L\big(\textbf{x}_t, \textbf{u}_t, t\big) 
		+ \boldsymbol{a}\big( \textbf{x}_t, \textbf{u}_t \big)^\top \textbf{y}(t) 
		+ \sum_{i=1}^l \boldsymbol{b}_i \big( \textbf{x}_t, \textbf{u}_t \big)^\top \textbf{z}_t^i,
	\end{align*}
	and $\textbf{y}_t \in \mathbb{R}^n$ is an adjoint process which is obtained based on the following backward SDE:
	\begin{align}\label{Eq:adjoint-process}
		\begin{cases}
			d\textbf{y}_t &= -H_{\textbf{x}}\big(\textbf{x}, \textbf{y}, \textbf{u}, \textbf{Z}, t\big)\:dt 
			+ \sum_{i=1}^l \textbf{z}_t^i dw^i_t \\
			\textbf{y}_T &= \nabla_{\textbf{x}(t)} h(\textbf{x}_T),
		\end{cases}
	\end{align}
	with $\textbf{z}^i \in \mathbb{R}^n$ being a control process associated to the adjoint process $\textbf{y}$
	\textcolor{black}{where $\textbf{z}^i_t = \mathcal{J}_{\textbf{x}(t)}\textbf{y}_t^\top \boldsymbol{b}_i(\textbf{x}_t,\textbf{u}_t)$,}
	$\textbf{Z} = \{\textbf{z}^i\}_{i=1}^l$, 
	and $H_{\textbf{x}}$ is the gradient of $H$ w.r.t. $\textbf{x}$.
\end{proposition}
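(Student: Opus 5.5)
The plan is the classical first-order variational argument: introduce the first variation process of the state, use the adjoint pair $(\textbf{y},\textbf{Z})$ to eliminate it by Itô's product rule, and identify what remains with $H_{\textbf{u}}$. Throughout, I assume the standing regularity: $\boldsymbol{a},\boldsymbol{b}_i,L,h$ are $C^1$ in $(\textbf{x},\textbf{u})$ with bounded derivatives (or polynomial growth), and $\textbf{v}$ is a square-integrable $\mathcal{F}_t$-adapted direction with $\textbf{u}+\epsilon\textbf{v}\in U$ for small $\epsilon$, which is possible since $U$ is open.

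\textbf{Step 1 (first variation of the state).} Define $\boldsymbol{\xi}_t := \lim_{\epsilon\to0}\epsilon^{-1}(\textbf{x}^{\textbf{u}+\epsilon\textbf{v}}_t-\textbf{x}^{\textbf{u}}_t)$. Differentiating (\ref{EQ:FBSDE}) formally gives the linear SDE
\begin{align*}
d\boldsymbol{\xi}_t = \big(\boldsymbol{a}_{\textbf{x}}\boldsymbol{\xi}_t+\boldsymbol{a}_{\textbf{u}}\textbf{v}_t\big)dt+\sum_{i=1}^l\big(\boldsymbol{b}_{i,\textbf{x}}\boldsymbol{\xi}_t+\boldsymbol{b}_{i,\textbf{u}}\textbf{v}_t\big)dw^i_t,\qquad \boldsymbol{\xi}_0=0,
\end{align*}
where all Jacobians are evaluated along $(\textbf{x}_t,\textbf{u}_t)$. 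To justify this, I would write the equation for the difference quotient and apply Burkholder--Davis--Gundy and Gronwall, which yields convergence in $L^2(\Omega;C[0,T])$. Dominated convergence then allows differentiation under the expectation in (\ref{EQ:functional}):
\begin{align*}
\delta_{\textbf{v}}J=\mathbb{E}\Big\{\int_0^T\big(L_{\textbf{x}}^\top\boldsymbol{\xi}_t+L_{\textbf{u}}^\top\textbf{v}_t\big)dt+\nabla h(\textbf{x}_T)^\top\boldsymbol{\xi}_T\Big\}.
\end{align*}

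\textbf{Step 2 (duality via Itô).} Let $(\textbf{y},\textbf{Z})$ solve (\ref{Eq:adjoint-process}). Note that $H_{\textbf{x}}=L_{\textbf{x}}+\boldsymbol{a}_{\textbf{x}}^\top\textbf{y}+\sum_i\boldsymbol{b}_{i,\textbf{x}}^\top\textbf{z}^i$. Apply Itô's product rule to $\textbf{y}_t^\top\boldsymbol{\xi}_t$ on $[0,T]$. The quadratic covariation contributes $\sum_i(\textbf{z}^i_t)^\top(\boldsymbol{b}_{i,\textbf{x}}\boldsymbol{\xi}_t+\boldsymbol{b}_{i,\textbf{u}}\textbf{v}_t)dt$. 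The drift of $\textbf{y}$ contributes $-H_{\textbf{x}}^\top\boldsymbol{\xi}_t\,dt$. In the sum, the terms $\boldsymbol{a}_{\textbf{x}}$ and $\boldsymbol{b}_{i,\textbf{x}}$ acting on $\boldsymbol{\xi}$ cancel, leaving
\begin{align*}
\mathbb{E}\big[\nabla h(\textbf{x}_T)^\top\boldsymbol{\xi}_T\big]=\mathbb{E}\int_0^T\Big(-L_{\textbf{x}}^\top\boldsymbol{\xi}_t+\textbf{y}_t^\top\boldsymbol{a}_{\textbf{u}}\textbf{v}_t+\sum_i(\textbf{z}^i_t)^\top\boldsymbol{b}_{i,\textbf{u}}\textbf{v}_t\Big)dt,
\end{align*}
because $\boldsymbol{\xi}_0=0$ and the stochastic integrals are true martingales. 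Substituting this into Step 1 removes every $\boldsymbol{\xi}$ term, and the integrand becomes $(L_{\textbf{u}}+\boldsymbol{a}_{\textbf{u}}^\top\textbf{y}_t+\sum_i\boldsymbol{b}_{i,\textbf{u}}^\top\textbf{z}^i_t)^\top\textbf{v}_t=H_{\textbf{u}}^\top\textbf{v}_t$, which is the claim.

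\textbf{Step 3 (form of $\textbf{z}^i$).} The identity $\textbf{z}^i_t=\mathcal{J}_{\textbf{x}}\textbf{y}_t^\top\boldsymbol{b}_i$ is obtained in the Markovian setting. There I would write $\textbf{y}_t=\phi(t,\textbf{x}_t)$ for a smooth decoupling field $\phi$, by the four-step scheme or the Feynman--Kac representation of the BSDE. Applying Itô's formula to $\phi(t,\textbf{x}_t)$ and matching its $dw^i$ coefficients with those of (\ref{Eq:adjoint-process}) gives $\textbf{z}^i=\nabla_{\textbf{x}}\phi\,\boldsymbol{b}_i$. If the control is a feedback control, the same matching applies.

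\textbf{Main obstacle.} The delicate part is the analysis rather than the algebra: proving that the BSDE (\ref{Eq:adjoint-process}) is well posed with $\textbf{Z}\in L^2$, and that the local martingale terms in Step 2 are genuine martingales. For the martingale property I would bound $\mathbb{E}\sup_t|\boldsymbol{\xi}_t|^2$ and $\mathbb{E}\sup_t|\textbf{y}_t|^2$ and then use BDG. The Markovian representation in Step 3 needs the most smoothness, and there I would simply assume the required regularity of $\phi$.
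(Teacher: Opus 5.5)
Your proposal is correct and is essentially the classical convex-perturbation and duality argument from Yong--Zhou: the first-variation SDE, It\^o's product rule on $\textbf{y}_t^\top\boldsymbol{\xi}_t$, and identification of $H_{\textbf{u}}$. This is exactly what the paper defers to, since it gives no proof of its own beyond the citation. Two assumptions you use silently are worth stating: your covariation step takes $dw^i_t\,dw^j_t=\delta_{ij}\,dt$, which is the setting in which the stated Hamiltonian is correct (with a general $\textbf{Q}$ the $\textbf{z}$-terms would need weights $q_{i,j}$), and your Markovian Step 3 is the setting in which $\textbf{z}^i_t=\mathcal{J}_{\textbf{x}}\textbf{y}_t\,\boldsymbol{b}_i$ makes sense at all (non-Markovian analogue: $\textbf{z}^i_t=D^i_t\textbf{y}_t$).
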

\begin{proof}
	Please refer to \citep{yong1999stochastic,Geng2021} for the proof. 
\end{proof}

\begin{definition}\label{Def2}
	The control process $\textbf{u}^*$ is called a critical decision for problem $P_1$,
	if $\delta_\textbf{v} J(\textbf{u}^*,\textbf{x}_0) = 0$ through the perturbation $\textbf{u}^* + \epsilon \textbf{v}$
	for any $\mathcal{F}_t$-adapted process $\textbf{v}_t \in U$.
\end{definition}

\noindent \textbf{Iterative algorithm:}
For deterministic control decisions,
a gradient projection method can be established
to find the optimum solution $\textbf{u}$ \citep{archibald2020stochastic,WangLiu2025}.
This method utilizes the following update rule:
\begin{align}\label{Eq:gradient-projection}
\textbf{u} \leftarrow \mathsf{P}\left(\textbf{u} + \lambda\: \mathbb{E}\big\{ H_u(\textbf{x},\textbf{y},\textbf{u},\textbf{Z},t) \big\}\right),
\end{align}
where $\lambda$ is the learning rate and $\mathsf{P}(\cdot)$ is a projection operator to $U$.

\subsection{Malliavin Calculus}
\label{Sec:MalliavinDerivative}
Malliavin derivative is a tool in the stochastic analysis that
capture the sensitivity of processes with respect to the variations of Wiener process $\textbf{W}=(w_s)_{s\in[0,t]}$.
Assume process $x_t = \int_0^t g_t dw_t$ for deterministic function $g_t$, 
then its Malliavin derivative against $w_s$, denoted by $D_sx_t$, is defined to be $D_sx_t = g_s$ for all $s\leq t$.
The chain rule also holds for this derivative,
implying that for an adapted process $u_t$ and a smooth functional $F(\cdot)$ in the domain of the Malliavin derivative,
we can get $D_s F(u_t) = \frac{\partial  F(u_t)}{\partial u_t} D_su_t$.
We also need to mention the integration-by-parts formula in the Malliavin calculus \citep{alos2021malliavin,nualart2018introduction}, 
which plays a vital role in this work.
It states that for the functional $F(\cdot)$ 
and any adapted process $u_s$ with square-integrability, one can get 
\begin{align}\label{Eq:integration-by-parts}
\mathbb{E}\Big\{ F \int_0^t u_s \: dw_s  \Big\} = \mathbb{E}\Big\{ \int_0^t D_sF \:u_s \:ds \Big\}.
\end{align}
We also need to mention the Malliavin derivative of a diffusion process.
For this, consider the $n$-dimensional SDE \vspace{-5 pt}
\begin{align}\label{Eq:SDE}
	\text{SDE}(\mathbf{a}, \mathbf{B}, \mathbf{x}_0): ~~ d\textbf{x}_t = \textbf{a}(\textbf{x}_t, t) dt + \sum_{l=1}^d \textbf{b}_l(\textbf{x}_t, t) dw^l_t
\end{align}
with $\textbf{a}: \mathbb{R}^n\!\times\![0,T] \!\to\! \mathbb{R}^n $ 
and $\textbf{B}=[\textbf{b}_1,...,\textbf{b}_d]:\mathbb{R}^n\!\times\![0,T] \!\to\! \mathbb{R}^{n\times d}$ 
being measurable functions with bounded partial derivatives.
Then, 
the Malliavin derivative of $\textbf{x}_t$ against $w_s^i$ follows  \citep{nualart2018introduction}:
\begin{align}\label{Eq:MalliavinDerv0}
	D_s^i \textbf{x}_t=\textbf{b}_i(\textbf{x}_t)
	+\sum_{l=1}^d \int_s^t \mathcal{J}_{\textbf{x}}\textbf{b}_l(\textbf{x}_\tau) D_s^i \textbf{x}_\tau d w_\tau^l
	+\int_s^t \mathcal{J}_{\textbf{x}}\textbf{a}(x_\tau) D_s^i \textbf{x}_\tau d \tau,~~~i\in\{1,...,d\}.
\end{align}
Moreover, the Malliavin matrix $D_s\textbf{x}_t=[D_s^1\textbf{x}_t,\ldots,D_s^d\textbf{x}_t]$ follows \citep{nualart2018introduction}
\begin{align}\label{Eq:MatrixMalliavin}
	D_s\textbf{x}_t = \boldsymbol{\Gamma}_{s,t} \: \textbf{B}(\textbf{x}_s, s),
\end{align}
where $\boldsymbol{\Gamma}_{s,t} = \textbf{Y}_t \: \textbf{Z}_s$, 
and the matrix-valued stochastic flows $\textbf{Y}_t = \frac{\partial \textbf{x}_t}{\partial \textbf{x}_0} 
\in \mathbb{R}^{n\times n}$ and $\textbf{Z}_t = \textbf{Y}_t^{-1} \in \mathbb{R}^{n\times n}$ 
comply with the following variational SDEs:
\begin{align}\label{Eq:Y-Z}
	\textbf{Y}_t&=\textbf{I}+\sum_{l=1}^d \int_0^t \mathcal{J}_\textbf{x} \textbf{b}_l\left(\textbf{x}_s,s\right) \textbf{Y}_s d w_s^l
	+\int_0^t \mathcal{J}_\textbf{x} \textbf{a}\left(\textbf{x}_s,s\right) \textbf{Y}_s d s \notag\\
	\textbf{Z}_t &= \textbf{I}-\sum_{l=1}^d \int_0^t \textbf{Z}_s \mathcal{J}_\textbf{x} \textbf{b}_{l}\left(\textbf{x}_s,s\right) d w_s^l
	-\int_0^t \textbf{Z}_s\left(\mathcal{J}_\textbf{x} \textbf{a}\left(\textbf{x}_s,s\right)-\sum_{l=1}^d \mathcal{J}_\textbf{x} \textbf{b}_{l}\left(\textbf{x}_s,s\right)^2 \right) d s.
\end{align}
It is noteworthy that the Malliavin derivative has the following expression for the scalar case, (i.e., $n=d=1$):
\begin{align}\label{Eq:D_sx_t}
D_sx_t = b(x_s,s)\exp\bigg( \int_s^t \big( a_x(x_t,t)-\frac12 b_x(x_t,t )^2 \big) dt + \int_s^t b_x(x_t,t) dw_t \bigg),~~\text{for}~s\leq t.
\end{align}

\section{Variational Analysis of SOCPs using Malliavin Calculus}
\label{Sec:Malliavin-Variational}
Proposition~(\ref{Propos:Variation}) find the variation of the cost functional of SOCP~$\mbox{P}_1$
based on an adjoint state governed by a backward SDE.
However, solving a backward SDE should lead to a process $\textbf{y}_t$ 
that should at the same time satisfy the terminal condition $\textbf{y}_T = \nabla_{\textbf{x}(t)} h(\textbf{x}_T)$ 
and be $\mathcal{F}_t$-adapted.
This requirement makes the solution methodologies challenging \citep{chessari2023numerical}.
Instead, we intend to develop a variational framework for SOCP $\mbox{P}_1$
based on the Malliavin calculus that can provide a more efficient solution approach.

\noindent \textbf{Scalar Scenario}:

In this section, we concentrate on a scalar case of SOCP $\mbox{P}_1$, 
where $n=k=1$.
This indeed gives an intuition for the general case 
where we have a vector problem.
In this regard, we consider the following theorem.
\begin{theorem}\label{Thm1}
	Consider SOCP $\mbox{P}_1$ (\ref{EQ:StochasticControl}) with $n=k=1$,
	the variation of cost functional $J(u,x_0)$
	along direction $v$, with perturbation $u+\epsilon\: v$, can be obtained by:
	\begingroup\makeatletter\def\f@size{9}\check@mathfonts
	\begin{align}\label{Eq:variation-scalar}
	\delta_v J(u,x_0) 
	=\! \mathbb{E}\int_0^T \! 
	&
	\bigg\{
	       \frac{a_u(s) - b_x(s)b_u(s)}{b(s)} \Big( \int_s^T \! L_x(t) D_sx_t \:dt  + h_x(x_T)D_sx_T \Big) \notag \\
	   &+  \frac{b_u(s)}{b(s)} \Big( \int_s^T \! \big(L_{xx}(t) {D_sx_t} + L_{xu}(t) D_su_t \big)D_sx_t\: dt + h_{xx}(x_T) D_sx_T^2 \Big) +L_u(s) 
	 \bigg\} v_s ds,
	\end{align}
	\endgroup
	where $D_sx_t$ (respectively, $D_su_t$) is the Malliavin derivative of $x_t$ (respectively, $u_t$) against $w_s$,
	$L_x(t)$, $a_u(s)$, and $b(s)$ are short notations 
	for $L_x(x_t,u_t,t)$, $a_u(x_s,u_s)$ and $b(x_s,u_s)$, respectively,
	with $L_x$ and $L_{xx}$ denoting the first and second gradients of $L$ 
	w.r.t $x$, and
	$L_u$, $a_u$, $b_u$ showing the gradient of $L$, $a$ and $b$ w.r.t. $u$.
	\begin{proof}
		Please refer to \cref{App1}.
	\end{proof}
\end{theorem}
We need to highlight that the presence of $b(s)$ in the denominators of \cref{Eq:variation-scalar} is not problematic, 
as $\frac{D_sx_t}{b(s)}$ exists for $t\in [0,T]$.
Variation analysis in \cref{Thm1} does not rely on any adjoint process, 
in contrast to Proposition~(\ref{Propos:Variation}) which depends on adjoint state $\textbf{y}_t$ 
as well as on control process $\{\textbf{z}_t^i\}_{i=1}^d$ (see \cref{Eq:adjoint-process}).
However,  \cref{Thm1} needs computation of the Malliavin derivative of the diffusion process, i.e., $D_sx_t$,
which can be computed through an efficient and time-reversed mechanism without involving new process (see \cref{Sec:BackwardMalliavin}).

\begin{corollary}
	Consider $u^*$ be a critical point for $J(u,x_0)$ of problem $P_1$, 
	i.e., $\delta_vJ(u^*,x_0) = 0$ through the perturbation $u^* + \epsilon v$,
	then we have:
	\begin{align}
		\label{Eq:scalar-optimalitycondition}
	\mathbb{E}
	\bigg\{ &
	\frac{a_u(s) - b_x(s)b_u(s)}{b(s)} \Big( \int_s^T \! L_x(t) D_sx_t^* \:dt  + h_x(x_T^*)D_sx_T^* \Big) \notag \\
	&+  \frac{b_u(s)}{b(s)} \Big( \int_s^T \! \big( L_{xx}(t)D_sx_t^* + L_{xx}(t)D_su_t^* \big) D_sx_t^*\: dt + h_{xx}(x_T^*) {D_sx_T^*}^2 \Big) +L_u(s) ~\Big|~ \mathcal{F}_s
	\bigg\} = 0,
	\end{align}
	where $x_t^* = x_t^{u^*}$
	\begin{proof}
		It follows from this fact that \cref{Eq:variation-scalar} holds for every $\mathcal{F}_s$-adapted process $v_s$.
	\end{proof}
\end{corollary}

\noindent \textbf{Vector Scenario}:
The following theorem obtains a variation of SOCP~$\mbox{P}_1$ in the general case of vector problems 
based on the Malliavin calculus.
\begin{theorem}\label{Thm2}
	Consider SOCP $\mbox{P}_1$ (\ref{EQ:StochasticControl}),
	the variation of cost functional $J(\textbf{u},\textbf{x}_0)$
	along direction $\textbf{v}$, with perturbation $\textbf{u}+\epsilon\: \textbf{v}$, can be obtained by:
	\begingroup\makeatletter\def\f@size{9}\check@mathfonts
	\begin{align*}
		\delta_\textbf{v} J(\textbf{u},\textbf{x}_0) 
		= \mathbb{E} \!\int_0^T \! 
		\bigg\{ \!&
		\Big( \int_s^T \!  \nabla_\textbf{x}L(t)^\top \boldsymbol{\Gamma}_{s,t} \: dt+ \nabla_\textbf{x}h(\textbf{x}_T)^\top \boldsymbol{\Gamma}_{s,T} \Big) \Big( \mathcal{J}_\textbf{u}\textbf{a}(s)
		- \sum_{l,l'}^d q_{l,l'}\mathcal{J}_\textbf{x}\textbf{b}_l(s)  \mathcal{J}_\textbf{u}\textbf{b}_{l'}(s) \Big) \\
		&+ \sum_{l=1}^d 
		\Big(
		\int_s^T \!  \big({D_s^l\textbf{x}_t}^\top  \nabla_{\textbf{x}}^2L(t) + {D_s^l\textbf{u}_t}^\top  \nabla_{\textbf{u}\textbf{x}}L(t)\big) \boldsymbol{\Gamma}_{s,t} dt
		+ {D_s^l\textbf{x}_T}^\top \nabla_{\textbf{x}}^2h(\textbf{x}_T) \boldsymbol{\Gamma}_{s,T}
		\Big) 
		\:\mathcal{J}_\textbf{u}\textbf{b}_l(s) \\
		&+ \nabla_\textbf{u}L(s)^\top 
		\bigg\} \textbf{v}_s ds,
	\end{align*}
	\endgroup
	where 
	$D_s^l\textbf{x}_t$ (respectively, $D_s^l\textbf{u}_t$) is the Malliavin derivative of $\textbf{x}_t$ (respectively, $\textbf{u}_t$) 
	against $w_t^l$,
	$\textbf{Q}=[q_{i,j}]_{ij}$ is the diffusion matrix of Wiener process with $q_{l,l'}$ the diffusion coefficient between $w_t^l$ and $w_t^{l'}$,
	$\boldsymbol{\Gamma}_{s,t}$ relates to the Malliavin derivative $D_s\textbf{x}_t$ though \cref{Eq:MatrixMalliavin} 
	(i.e., $D_s\textbf{x}_t = \boldsymbol{\Gamma}_{s,t} \: \textbf{B}(\textbf{x}_s, s)$),
	and $\mathcal{J}_\textbf{x}\textbf{a}$ (respectively, $\mathcal{J}_\textbf{u}\textbf{a}$) stands for the Jacobian matrix of vector $\textbf{a}$ 
	w.r.t. $\textbf{x}$ (respectively, $\textbf{u}$).
	\begin{proof}
		Please refer to \cref{App2}.
	\end{proof}
\end{theorem}

Again, we compare the  variation analysis of \cref{Thm2} to that of Proposition~(\ref{Propos:Variation}).
The former does not depend on any adjoint process, while the latter does.
However, \cref{Thm2} need computation of stochastic flow $\boldsymbol{\Gamma}_{s,t}$ 
pertaining to the Malliavin derivative through \cref{Eq:MatrixMalliavin},
which can be computed efficiently using the approach presented in \cref{Sec:BackwardMalliavin}.

\begin{corollary}
Consider $\textbf{u}^*$ be a critical point for $J(\textbf{u},\textbf{x}_0)$  of problem $P_1$, 
i.e., $\delta_\textbf{v}J(\textbf{u}^*,\textbf{x}_0) = 0$ through the perturbation $\textbf{u}^* + \epsilon \textbf{v}$,
we then have:
\begingroup\makeatletter\def\f@size{9}\check@mathfonts
\begin{align}\label{Eq:vector-optimalitycondition}
\mathbb{E} 
\bigg\{ \!
\Big( \int_s^T& \!  \nabla_\textbf{x}L(t)^\top \boldsymbol{\Gamma}_{s,t}^* \: dt+ \nabla_\textbf{x}h(\textbf{x}_T^*)^\top \boldsymbol{\Gamma}_{s,T}^* \Big) \Big( \mathcal{J}_\textbf{u}\textbf{a}(s)
- \sum_{l,l'}^d q_{l,l'}\mathcal{J}_\textbf{x}\textbf{b}_l(s)  \mathcal{J}_\textbf{u}\textbf{b}_{l'}(s) \Big) \notag \\
&+ \sum_{l=1}^d 
\Big(
\int_s^T \!  \big({D_s^l\textbf{x}_t^*}^\top  \nabla_{\textbf{x}}^2L(t) + {D_s^l\textbf{u}_t^*}^\top  \nabla_{\textbf{u}\textbf{x}}L(t)\big) \boldsymbol{\Gamma}_{s,t}^* dt
+ {D_s^l\textbf{x}_T^*}^\top \nabla_{\textbf{x}}^2h(\textbf{x}_T^*) \boldsymbol{\Gamma}_{s,T}^*
\Big) 
\:\mathcal{J}_\textbf{u}\textbf{b}_l(s) \notag\\
&+ \nabla_\textbf{u}L(s)^\top ~\Big|~ \mathcal{F}_s \bigg\} = 0,
\end{align}
\endgroup
where $\textbf{x}_t^* = \textbf{x}_t^{\textbf{u}^*}$ and $D_s\textbf{x}_t^* = \boldsymbol{\Gamma}_{s,t}^* \: \textbf{B}(\textbf{x}_s^*, s)$.
\end{corollary}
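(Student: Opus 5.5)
The plan is to follow the proof of the scalar corollary and reduce the claim to \cref{Thm2} by a localization argument. Since $\textbf{u}^*$ is a critical decision in the sense of Definition~\ref{Def2}, we have $\delta_\textbf{v}J(\textbf{u}^*,\textbf{x}_0)=0$ for every admissible $\mathcal{F}_t$-adapted direction $\textbf{v}$. By \cref{Thm2}, evaluated along $\textbf{u}^*$ so that $\textbf{x}_t=\textbf{x}_t^*$, $\boldsymbol{\Gamma}_{s,t}=\boldsymbol{\Gamma}^*_{s,t}$ and $D_s\textbf{x}_t^*=\boldsymbol{\Gamma}^*_{s,t}\textbf{B}(\textbf{x}_s^*,s)$, this reads
\begin{align*}
\mathbb{E}\int_0^T \boldsymbol{\Phi}_s\,\textbf{v}_s\,ds=0,
\end{align*}
where the row vector $\boldsymbol{\Phi}_s$ is the integrand in braces in \cref{Thm2}. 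Note that $\boldsymbol{\Phi}_s$ is \emph{not} $\mathcal{F}_s$-adapted, because it contains integrals over $[s,T]$ and terminal quantities at time $T$.

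The first step is to apply Fubini and the tower property. For each $s$, $\textbf{v}_s$ is $\mathcal{F}_s$-measurable, so
\begin{align*}
\mathbb{E}\{\boldsymbol{\Phi}_s\textbf{v}_s\}=\mathbb{E}\{\mathbb{E}\{\boldsymbol{\Phi}_s\,|\,\mathcal{F}_s\}\textbf{v}_s\}.
\end{align*}
Setting $\boldsymbol{\psi}_s:=\mathbb{E}\{\boldsymbol{\Phi}_s\,|\,\mathcal{F}_s\}$, we obtain $\mathbb{E}\int_0^T\boldsymbol{\psi}_s\textbf{v}_s\,ds=0$ for all admissible $\textbf{v}$. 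Here $\boldsymbol{\psi}$ should be taken as the optional (or progressive) projection of $\boldsymbol{\Phi}$, so that it is a jointly measurable, adapted process.

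The second step is to choose test directions. Since $U$ is open, perturbations $\textbf{u}^*+\epsilon\textbf{v}$ stay admissible for bounded $\textbf{v}$ and small $\epsilon$, and linearity of the variation lets us take any bounded adapted $\textbf{v}$. We pick
\begin{align*}
\textbf{v}_s=\mathbb{1}_{A}\,\mathbb{1}_{[r,r+h]}(s)\,\textbf{e}_j,
\end{align*}
with $A\in\mathcal{F}_r$. Alternatively, we can take directly $\textbf{v}_s=\boldsymbol{\psi}_s^\top$ truncated to $\{|\boldsymbol{\psi}_s|\le N\}$. This gives $\mathbb{E}\int_0^T|\boldsymbol{\psi}_s|^2\mathbb{1}_{\{|\boldsymbol{\psi}_s|\le N\}}ds=0$, and letting $N\to\infty$ yields $\boldsymbol{\psi}_s=0$ for $ds\otimes d\mathcal{P}$-a.e.\ $(s,\omega)$. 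This is exactly \cref{Eq:vector-optimalitycondition}.

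The main obstacle is integrability. We need $\boldsymbol{\Phi}\in L^1(ds\otimes d\mathcal{P})$ so that Fubini applies and the conditional expectation is well defined. I would obtain this from the moment bounds on $\textbf{Y}_t$ and $\textbf{Z}_t$ in \cref{Eq:Y-Z}, which give $\sup_{s\le t}\mathbb{E}|\boldsymbol{\Gamma}_{s,t}|^p<\infty$ under bounded Jacobians of $\textbf{a}$ and $\textbf{b}_l$. Combined with polynomial growth of $L$, $h$ and their first two derivatives, and square-integrability of $D^l_s\textbf{u}^*_t$, Hölder's inequality then gives the required bound. The equality in the corollary is understood in this almost-everywhere sense in $s$.
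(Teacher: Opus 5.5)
Your deduction is correct and is the paper's own argument (written out there only for the scalar corollary: criticality gives $\delta_{\textbf{v}}J(\textbf{u}^*,\textbf{x}_0)=0$ for every adapted $\textbf{v}$, so the integrand of \cref{Thm2} must vanish after conditioning on $\mathcal{F}_s$); your optional projection, reduction to bounded directions via openness of $U$ and linearity, truncated test direction, and integrability check add rigor the paper omits. Be aware, though, that the deduction is only as sound as \cref{Thm2}, and that theorem appears to be wrong when $\mathcal{J}_{\textbf{u}}\textbf{b}_l\neq 0$: its integration by parts of $\textbf{Y}_t\int_0^t\textbf{Z}_s\mathcal{J}_{\textbf{u}}\textbf{b}_l(s)\textbf{v}_s\,dw_s^l$ treats the non-adapted integrand $\boldsymbol{\Gamma}_{s,t}\mathcal{J}_{\textbf{u}}\textbf{b}_l(s)\textbf{v}_s$ as adapted and so loses the term $\nabla_{\textbf{x}}L(t)^\top(D_s^l\textbf{Y}_t)\textbf{Z}_s\mathcal{J}_{\textbf{u}}\textbf{b}_l(s)\textbf{v}_s$; for example, with $dx_t=u_t\,dt+(\beta x_t+cu_t)\,dw_t$, $L(x,u,t)=x-u^2/2$, $h=0$, the control $u^*_s=T-s$ is critical, yet the conditional expectation in the corollary equals $-\beta c\,(T-s)\neq 0$, so the corollary as stated holds only when such terms vanish (e.g.\ $\mathcal{J}_{\textbf{u}}\textbf{b}_l=0$).
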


\section{An Iterative Approach for SOCPs based on Malliavin Calculus}
In this section, we intend to develop a gradient projection algorithm to solve SOCPs with
deterministic control decision $\textbf{u} \in \mathcal{U}$, 
where $\mathcal{U}$ here is a nonempty convex set whose elements are square-integrable.
For this purpose, we first find an expression
for the Gateaux derivative~\citep{Gong2017Efficient} of cost functional $\textbf{J}(\textbf{u},\textbf{x}_0)$,
and then outline a fixed-point scheme for optimizing the control decision $\textbf{u}$.
Accordingly, we manage to devise an iterative algorithm to solve  SOCP $\mbox{P}_1$.

We now need to draw the following remark.
\begin{remark}\label{Remark1}
	For SOCP $\mbox{P}_1$~(\ref{EQ:StochasticControl}) with deterministic control decision $\textbf{u} \in \mathcal{U}$,
	\cref{Thm1} (respectively, \cref{Thm2}) provides a representation for the Gateaux derivative of cost functional $J(u,x_0)$ (respectively, $J(\textbf{u},\textbf{x}_0)$) 
	with respect to the control decision at $t=s$ \citep{Gong2017Efficient}:
	\begin{align}\label{Eq:scalar-gradient}
		J'(u,x_0)_{|s} = 
		\mathbb{E}
		\bigg\{ &
		\frac{a_u(s) - b_x(s)b_u(s)}{b(s)} \Big( \int_s^T \! L_x(t) D_sx_t \:dt  + h_x(x_T)D_sx_T \Big) \notag \\
		&+  \frac{b_u(s)}{b(s)} \Big( \int_s^T \!L_{xx}(t) {D_sx_t}^2\: dt + h_{xx}(x_T) D_sx_T^2 \Big) +L_u(s) 
		\bigg\}
	\end{align}
	and 
	\begingroup\makeatletter\def\f@size{9}\check@mathfonts
	\begin{align}\label{Eq:vector-gradient}
		\nabla_\textbf{u}J(\textbf{u},\textbf{x}_0)_{|s} =
		\mathbb{E}
		\bigg\{ \!&
		\Big( \int_s^T \!  \nabla_\textbf{x}L(t)^\top \boldsymbol{\Gamma}_{s,t} \: dt+ \nabla_\textbf{x}h(\textbf{x}_T)^\top \boldsymbol{\Gamma}_{s,T} \Big) \Big( \mathcal{J}_\textbf{u}\textbf{a}(s)
		- \sum_{l,l'}^d q_{l,l'}\mathcal{J}_\textbf{x}\textbf{b}_l(s)  \mathcal{J}_\textbf{u}\textbf{b}_{l'}(s) \Big) \notag \\
		&+ \! \sum_{l=1}^d \Big( \int_s^T [\boldsymbol{\Gamma}_{s,t} \textbf{B}(s)]_l^\top  \nabla_{\textbf{x}}^2L(t) \boldsymbol{\Gamma}_{s,t} dt
		+ [\boldsymbol{\Gamma}_{s,T} \textbf{B}(s)]_l^\top \nabla_{\textbf{x}}^2h(\textbf{x}_T) \boldsymbol{\Gamma}_{s,T}
		\Big) \:\mathcal{J}_\textbf{u}\textbf{b}_l(s) \notag \\
		&+ \nabla_\textbf{u}L(s)^\top 
		\bigg\}.
	\end{align}
	\endgroup
\end{remark}

Having obtained the Gateaux derivative based on Remark~\ref{Remark1}, we can outline a gradient projection method inspired by the scheme~(\ref{Eq:gradient-projection}).
Specifically, for scalar SOCPs, we set the following update rule:
\begin{align}\label{Eq:gradient-projection-scalar}
	u^{i+1} \leftarrow \mathsf{P}\left(u^i + \lambda\: J'(u^i,x_0) \right),
\end{align}
and for vector SOCPs:
\begin{align}\label{Eq:gradient-projection-vector}
	\textbf{u}^{i+1} \leftarrow \mathsf{P}\left(\textbf{u}^{i} + \lambda\: \nabla_\textbf{u}J(\textbf{u}^{i},\textbf{x}_0) \right),
\end{align}
where $i$ is the iteration index.

In the sequel, we aim to numerically implement the gradient projection iteration in \cref{Eq:gradient-projection-scalar,Eq:gradient-projection-vector}.
To accomplish this goal,  
we need to 
(i) apply a discretization method on the gradient projection,
(ii) compute the stochastic flow $\boldsymbol{\Gamma}_{s,t}$ and Malliavin derivative $D_s\textbf{x}_t$ in \cref{Eq:scalar-gradient,Eq:vector-gradient},
and (iii) accordingly develop a numerical iterative algorithm to solve SOCP $\mbox{P}_1$.

\subsection{Numerical Implementation of Gradient Projection}
\label{Sec:numerical}
We here discretize the gradient projection scheme.
Accordingly, we first partition the interval $[0,T]$
based on $N$ fixed sub-intervals $\Pi_N := \{[t_{j-1},t_j)\}_{j=1}^N$,
where $t_0=0$, $t_N=T$ and  $\Delta t := t_j - t_{j-1} = \frac{T}{N}$ is the duration of each sub-interval.
Accordingly, the piece-wise control decision can be obtained by $\textbf{u}_{t_j}$ for $j\in\{1,...,N\}$.
We then apply this discretization policy for the gradient projection scheme (\ref{Eq:gradient-projection-scalar})-(\ref{Eq:gradient-projection-vector}) to get:
\begin{align}\label{Eq:gradient-projection-discrete}
	u^{i+1}_{t_j} &\leftarrow \mathsf{P}_N\left(u^i_{t_j} + \lambda\: J'^{N}(u^i,x_0)_{|t_{j}} \right),\notag \\
	\textbf{u}^{i+1}_{t_j} &\leftarrow \mathsf{P}_N\left(\textbf{u}^{i}_{t_j} + \lambda\: \nabla_\textbf{u}J^{N}(\textbf{u}^{i},\textbf{x}_0)_{|t_{j}} \right),\quad\text{for}~j\in\{1,...,N\}
\end{align}
where $\textbf{u}^{i}_{t_j}$ is the discrete approximation of $\textbf{u}_{t_j}$  at iteration $i$, $J'^N$ (respectively, $\nabla_\textbf{u}J^N$) is the discrete approximation of $J'$ (respectively, $\nabla_\textbf{u}J$),
and $\mathsf{P}_N(\cdot)$ is an operator that projects onto the set $\mathcal{U}_N$ with
\begingroup\makeatletter\def\f@size{9}\check@mathfonts
\begin{align*}
\mathcal{U}_N = \left\{ \textbf{u}\in\mathcal{U} ~\Big|~ \textbf{u} = \sum_{j=1}^N \boldsymbol{\alpha}_jI_{[t_{j-1},t_j)},~\boldsymbol{\alpha}_j \in \mathbb{R}^k \right\}.
\end{align*}
\endgroup
The following theorem then studies the convergence of the
gradient projection scheme~(\ref{Eq:gradient-projection-discrete});
Note that we present this theorem merely for the vector SOCPs, the scalar version can be simply derived.
\begin{theorem}
	Denote $\textbf{u}^{*,N}$ as the fixed-point solution of the projection scheme~(\ref{Eq:gradient-projection-discrete}), i.e.,
	$
	\textbf{u}^{*,N} = \mathsf{P}_N(\textbf{u}^{*,N} + \lambda \nabla_\textbf{u}J^N(\textbf{u}^{*,N},\textbf{x}_0))
	$.
	Further, assume $\nabla_\textbf{u}J(\textbf{u},\textbf{x}_0)$ is Lipschitz continuous with the constant $L$ around $\textbf{u}^*$ and $\textbf{u}^{*,N}$, i.e., 
		\begin{align*}
		\| \nabla_\textbf{u}(\textbf{u}^*,\textbf{x}_0) - \nabla_\textbf{u}(\textbf{u},\textbf{x}_0)\| &\leq L \| \textbf{u}^* - \textbf{u}\|,\quad\forall~u\in\mathcal{U},\\
		\| \nabla_\textbf{u}(\textbf{u}^{*,N},\textbf{x}_0) - \nabla_\textbf{u}(\textbf{u},\textbf{x}_0)\| &\leq L \| \textbf{u}^{*,N} - \textbf{u}\|,\quad\forall~u\in\mathcal{U}_N
	\end{align*}
	and uniformly monotone with the rate $r$ around $\textbf{u}^*$ and $\textbf{u}^{*,N}$, i.e., 
	\begin{align*}
	\big(\nabla_\textbf{u}(\textbf{u}^*,\textbf{x}_0) - \nabla_\textbf{u}(\textbf{u},\textbf{x}_0)\big)^\top( \textbf{u}^{*} - \textbf{u}) &\geq r\| \textbf{u}^{*} - \textbf{u}\|^2,\quad\forall~u\in\mathcal{U},\\
	\big(\nabla_\textbf{u}(\textbf{u}^{*,N},\textbf{x}_0) - \nabla_\textbf{u}(\textbf{u},\textbf{x}_0)\big)^\top( \textbf{u}^{*,N} - \textbf{u}) &\geq r\| \textbf{u}^{*,N} - \textbf{u}\|^2,\quad\forall~u\in\mathcal{U}_N,
	\end{align*}
	Moreover, $\nabla_\textbf{u}J^N$ 
	tends to
	$\nabla_\textbf{u}J$ as 
	$N \to \infty$, and $\lambda$ is chosen such 
	that $0<1-2r\lambda+(1+2L)\lambda^2<\delta^2$ 
	with $\delta\in(0,1)$.
	Then, the discrete approximation of control decision complies with the following bound as $i\to\infty$:
	$$
	\| \textbf{u}^* - \textbf{u}^i\| \sim \mathcal{O}(\Delta t).
	$$
	\begin{proof}
		It can be proved by following the procedure of Theorem 3.1 in \citep{Gong2017Efficient}.
	\end{proof}
\end{theorem}
The next section explains how the stochastic flow and Malliavin derivative required for the Gateaux derivative $\nabla_\textbf{u}J(\textbf{u},\textbf{x}_0)$ can be computed.

\begin{algorithm}[t]
	\small
	\caption{\scalebox{0.98}{Computation of Stochastic Flow $\boldsymbol{\Gamma}_{s,t}$ and Malliavin Derivative $D_s\textbf{x}_t$}}
	\label{Alg:backward-malliavin}
	\DontPrintSemicolon
	\KwData{Forward trajectory $(\textbf{x}_r, \textbf{w}_r)$, time‐step $\Delta t$.}
	\textcolor{darkblue}{\tcp{\small Initialize via terminal condition:}}
	$\boldsymbol{\Gamma}_{r,r} \gets \textbf{I}$\;
	\textcolor{darkblue}{\tcp{ Solve the SDE~(\ref{Eq:BSDE_gamma}):}}
	\For{$t \gets r$ \KwTo $T$ with step $\Delta t$}{
		Compute 
		\scalebox{0.9}{$
			\displaystyle \Delta \boldsymbol{\Gamma}_{r,t}
			\;=\;\,\Bigl( \mathcal{J}_{\textbf{x}} \textbf{a}(\textbf{x}_t,\textbf{u}_t) \Delta t
			+\sum_{l=1}^d \mathcal{J}_{\textbf{x}} \textbf{b}_l(\textbf{x}_t,\textbf{u}_t)\bigl(\textbf{w}_{t}-\textbf{w}_{t-\Delta t}\bigr)\Bigr) \boldsymbol{\Gamma}_{r,t}
			$}\;
		Update 
		$\boldsymbol{\Gamma}_{\,r\,,\,t+\Delta t}
		\;\gets\;\boldsymbol{\Gamma}_{r,t} + \Delta \boldsymbol{\Gamma}_{r,t}$\;
	}
	\textcolor{darkblue}{\tcp{ Recover Malliavin derivatives:}}
	\For{$t\in\{r,r+\Delta t,\dots,T\}$}
	{
		$D_r \textbf{x}_t \;\gets\;\boldsymbol{\Gamma}_{r,t}\;\textbf{B}\bigl(\textbf{x}_r,r\bigr)$\;
	}
	\textbf{return} $\boldsymbol{\Gamma}_{r,t}$ and $D_r\textbf{x}_t$ for $t\in\{ r,r+\Delta t,...,T\}$
	\small
\end{algorithm}

\subsection{Computation of Stochastic Flow and Malliavin Derivative}\label{Sec:BackwardMalliavin}
The Gateaux derivative in Remark~\ref{Remark1} depends on the stochastic flow $\boldsymbol{\Gamma}_{s,t}$ (or Mallaivin derivative $D_sx_t$).
Here, we intend to leverage an approach to compute this information.
In this regard, we need to first present the following proposition.
	\begin{proposition}
	\label{Propos:BackwardSDE}
	Consider the Malliavin derivative $D_r \textbf{x}_t=[D_r^1 \textbf{x}_t,...,D_r^d \textbf{x}_t]$~(\ref{Eq:MalliavinDerv0}) of SDE~(\ref{EQ:FBSDE}), 
	we have 
	\begin{align}\label{Eq:vector_malliavin}
		D_r \textbf{x}_t = \boldsymbol{\Gamma}_{r,t} \:\textbf{B}(\textbf{x}_r,r),\qquad\text{for }r\leq t,
	\end{align}
	where 
	$\boldsymbol{\Gamma}_{r,t} \in \mathbb{R}^{n\times n}$ represents a linear SDE, 
	and adapted to the filtration $\mathcal{F}_{r}$ obtained by:
	\begin{align}\label{Eq:BSDE_gamma}
		&{\rm SDE}(\mathcal{J}_{\textbf{x}}\textbf{a},\mathcal{J}_{\textbf{x}}\textbf{B},\boldsymbol{\Gamma}_{r,r}=\textbf{I}):~ 
		d \boldsymbol{\Gamma}_{r,t} = \Bigg( \sum_{l=1}^d \mathcal{J}_{\textbf{x}}\textbf{b}_l(\textbf{x}_t,\textbf{u}_t)\: dw_t^l 
		+ \mathcal{J}_{\textbf{x}}\textbf{a}(\textbf{x}_t,\textbf{u}_t) dt  \Bigg)\boldsymbol{\Gamma}_{r,t},
	\end{align}
	with initial condition $\boldsymbol{\Gamma}_{r,r}=\textbf{I}$
	\begin{proof}
		see \cref{App3}.
	\end{proof}
\end{proposition}

Proposition~\ref{Propos:BackwardSDE} together with an SDE-simulator provide a method to recursively compute the Malliavin derivative $D_r\textbf{x}_t$. Specifically, for each $t \in [0,T]$, this approach starts from the  initial condition $\boldsymbol{\Gamma}_{r,r}= \textbf{I}$, gradually increments the parameter $t$, and computes the right hand side of \cref{Eq:BSDE_gamma} 
\cref{Alg:backward-malliavin} shows pseudo-code based on the Euler-Maruyama scheme with a convergence rate proportional to  $\sqrt{\Delta t}$ \citep{kloeden1992numerical}.

\subsection{A Gradient Projection Algorithm via Malliavin Calculus}
The materials of \cref{Sec:numerical,Sec:BackwardMalliavin} enables us to present an iterative mechanism
for solving SOCP~$\mbox{P}_1$.
We accordingly devise  a projection algorithm
based on the numerical scheme in \cref{Eq:gradient-projection-discrete}, 
\cref{Remark1} and \cref{Alg:backward-malliavin}.
As this algorithm is based on a \textbf{g}radient \textbf{pro}jection approach utilizing \textbf{Mal}liavin calculus, we term it as Mal-GPro.
This algorithm includes 3 main phases as follows.
\begin{itemize}[leftmargin=*]
	\item \textbf{Forward Pass}: SDE~(\ref{Eq:SDE}) is simulated within discrete partition $\Pi_N$ using any SDE-solver, e.g. Euler-Maruyama one, to obtain the stochastic paths $(\textbf{x}_t,\textbf{W}_t)_{t\in\Pi_N}$. Additionally, we compute the stochastic flow and Malliavin derivatives $(\boldsymbol{\Gamma}_{s,t}, D_s\textbf{x}_t)_{t\in\Pi_N,s\in\Pi_N}$ with $s\leq t$ using \cref{Alg:backward-malliavin}.
	
	\item \textbf{Gradient Computation}: the discrete gradient of cost functional $\nabla_\textbf{u}J^N(\textbf{u},\textbf{x}_0)$\,/\,$J'^N(u,\textbf{x}_0)$ is computed 
	according to Remarak~\ref{Remark1} and using \cref{Eq:scalar-gradient,Eq:vector-gradient}.
	
	\item \textbf{Update Control}: the control process is then updated via the projection gradient approach \cref{Eq:gradient-projection-discrete} with
	learning rate $\lambda$.
	
The pseudo-code of this algorithm is presented in \cref{Alg:main-algorithm}.

\end{itemize}

\begin{algorithm}[t]
	\caption{Gradient Projection Algorithm based on Malliavin Calculus (Mal-GPro)}
	\label{Alg:main-algorithm}
	\KwIn{
		Initial state $\textbf{x}_0$,
	}
	\KwOut{Optimal control process $\textbf{u}$.\\}
	
	\For{$\text{i} = 1$ \KwTo $N_{\text{eps}}$}{
		\textcolor{darkblue}{\tcp{Simulate the  SDE~(\ref{Eq:SDE}) within the partition $\Pi_N$ via any SDE solver}}
		$(\mathbf{x}_t, \mathbf{w}_t) \leftarrow \texttt{SDEsolve}(\mathbf{a}, \mathbf{B}, \mathbf{x}_0)$,~$t\in \Pi_N$\;
		
		\textcolor{darkblue}{\tcp{Compute Stochastic flow $\boldsymbol{\Gamma}_{s,t}$ and Malliavin derivative $D_sx_t$ 
		by solving SDE (\ref{Eq:BSDE_gamma}) using \cref{Alg:backward-malliavin}:}}
		$(\boldsymbol{\Gamma}_{s,t}, D_s \mathbf{x}_t) \leftarrow \texttt{SDEsolve}(\mathcal{J}_{\textbf{x}}\mathbf{a}, \mathcal{J}_{\textbf{x}}\mathbf{B}, \boldsymbol{\Gamma}_{t,t}),~~s \in \Pi_N, t\in \Pi_N, s\leq t$\;
		
		\textcolor{darkblue}{\tcp{Compute the functional gradient based on the SOCP problem:}}
		
		\If{scalar}{compute $J'^N(u,x_0)_{|t}$ via \cref{Eq:scalar-gradient}.}
		\Else{compute $\nabla_\textbf{u}J^N(\textbf{u},\textbf{x}_0)_{|t}$ via \cref{Eq:vector-gradient}.}
		
		\textcolor{darkblue}{\tcp{Update control process via the projection gradient~(\ref{Eq:gradient-projection-discrete}):}}
		\If{scalar}{
			$
			u_t^{i+1} \leftarrow \mathsf{P}_N(u^i_t + \lambda J'^N(u^i,x_0)_{|t})
			$, for $t\in \Pi_N$.
		}
		\Else
		{
		$\boldsymbol{u}_t^{i+1} \leftarrow \mathsf{P}_N(\boldsymbol{u}_t^i + \lambda \nabla_\textbf{u}J^N(\textbf{u}^i,\textbf{x}_0)_{|t}  )$, for $t\in \Pi_N$.
		}
	}
\end{algorithm}

\section{State-of-the-Art Algorithms from Literature}
We compare the performance of Mal-GPro algorithm with two adjoint-based algorithms derived 
based on the stochastic maximum principle with adjoint sensitivity mechanism. 
They are explained as follows.

\subsection{Gradient Projection Method}\label{Sec:Ad-GPro}
This algorithm is established based on the variational analysis presented in \cref{Sec:variational-introduction},
and follows the update mechanism in \cref{Eq:gradient-projection} \citep{Gong2017Efficient}.
In this regard, two $\mathcal{F}_t$ adapted processes, 
i.e., adjoint process $\textbf{y}_t$ and control process $\{\textbf{z}_t^i\}_{i=1}^l$,
should be found so as to simultaneously comply with the backward SDE and follow the terminal condition in ~(\ref{Eq:adjoint-process}).
There are numerous mechanisms to find such processes, including mesh-based approaches and parameterization methods \citep{chessari2023numerical, Yong2023FBSDE}.
The former considers pre-determined grid meshes for the process $\textbf{x}_t$,
and then obtains the processes $(\textbf{y}_t, \{\textbf{z}_t^i\}_{i=1}^l)$ as a function of considered meshes.
However, this methodology needs extensive computations and suffers from the curse of dimensionality \citep{archibald2020stochastic}.
In the parameterization methods, the processes $(\textbf{y}_t, \{\textbf{z}_t^i\}_{i=1}^l)$ are parameterized, e.g. using a neural network,
such that the adjoint equation \cref{Eq:adjoint-process} is fulfilled \citep{chessari2023numerical}.
For our simulations, we leverage the parameterization methodology,
and we term it as \textbf{Ad}joint  \textbf{G}radient \textbf{Pro}jection (Ad-GPro).
With \emph{ideal} parameterization, this method obtains the exact solution \cite{Ji2022,chessari2023numerical} and thus provides a benchmark for the comparison.

\subsection{Stochastic Gradient Descent Method}
This algorithm, which provides an computationally efficient approach, has been built on stochastic procedure based on \cref{Eq:gradient-projection}.
In contrast to Ad-GPro, it approximates the process $(\textbf{y}_t, \{\textbf{z}_t^i\}_{i=1}^l)$ using a single-sample approximation 
and through a backward simulation mechanism \citep{archibald2020stochastic,WangLiu2025}.
This methodology thus reduces the extensive computations of Ad-GPro at the cost of performance decrease 
or requiring  more update iterations for convergence.
We term this algorithm  as \textbf{Ad}joint  \textbf{S}tochastic \textbf{G}radient \textbf{Descent} (Ad-SGD).

\section{Experiments}
We here consider scalar and vector experiments of stochastic optimal control problems 
for the evaluation purposes.
We then benchmark our proposed algorithm Mal-GPro against Adjoint sensitivity based algorithms, i.e., Ad-SGD and Ad-GPro.

\subsection{Scalar SOCPs}
\noindent \textbf{Algorithms Setup}:
For all of the considered algorithms, 
we set the number of batches to $100$,
and learning rate to  $\lambda = 10^{-2}$, unless specified.
For the Ad-GPro, we use a neural network parameterization
to estimate the adjoint\,/\,control processes $(\textbf{y}_t,\{\textbf{z}_t^i\}_{i=1}^l)$.
For this, we apply 3 hidden layers with 16 neurons and \emph{tanh} activation function.

\noindent\textbf{Experiment 1:}
We now consider a Black-Scholes type control problem \cite{du2013effective}
\begingroup\makeatletter\def\f@size{8.5}\check@mathfonts
\begin{align}\label{Eq:scalar-exp1}
	&\min_{u\in\mathcal{U}} J(u,x_0) = \frac{1}{2} \int_0^T \mathbb{E}[(x_t - x_t^\star)^2] dt + \frac{1}{2} \int_0^T u_t^2 dt, \notag \\
	&\text{s.t.} \quad dx_t = u_tx_tdt + \sigma x_tdw_t, 
\end{align}
\endgroup
with $x_0=1$, $T=1$, 
$x^*_t=\dfrac{e^{\sigma^2 t}-(T-t)^2}{1/{x_0}-T t+\frac{t^2}{2}}+1$,
and $\sigma=0.01$ as the diffusion coefficient.
The following analytical solution then can be found as a benchmark:
$
u_t^a = \frac{T - t}{{1}/{x_0} - Tt + \frac{t^2}{2}}, \quad
\label{eq:bs-exact-control}
$

\begin{figure}[t!]
	\centering
	\subcaptionbox{\label{Fig:scalar-exp1-objective}}
	{
		\includegraphics[width=0.35\textwidth]{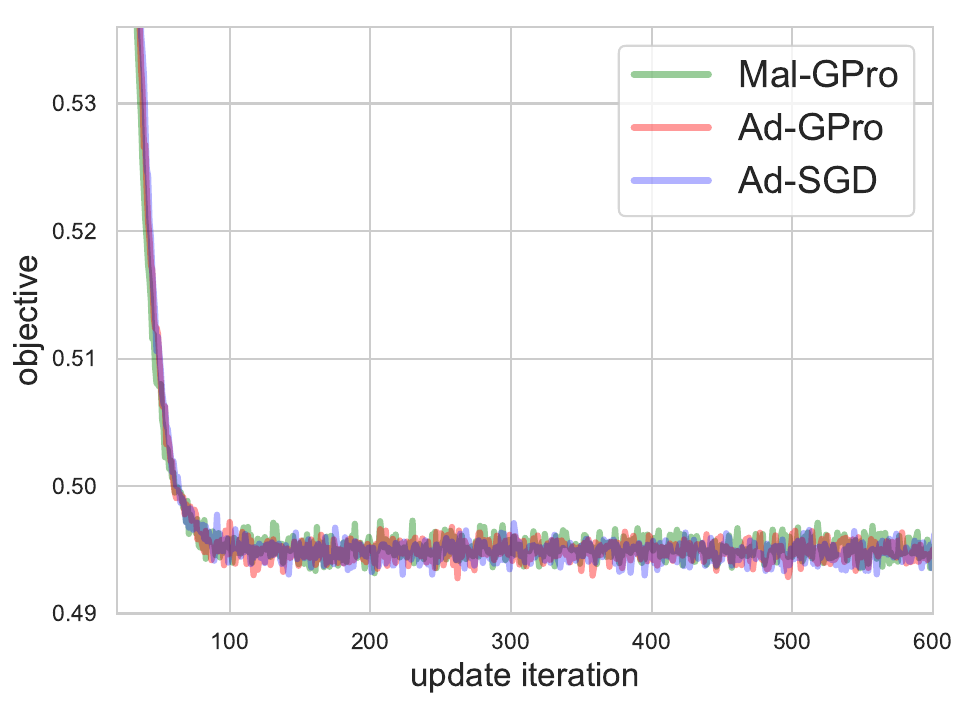}
	}
	\hspace{12 pt}
	\subcaptionbox{\label{Fig:scalar-exp1-control}}
	{
		\includegraphics[width=0.35\textwidth]{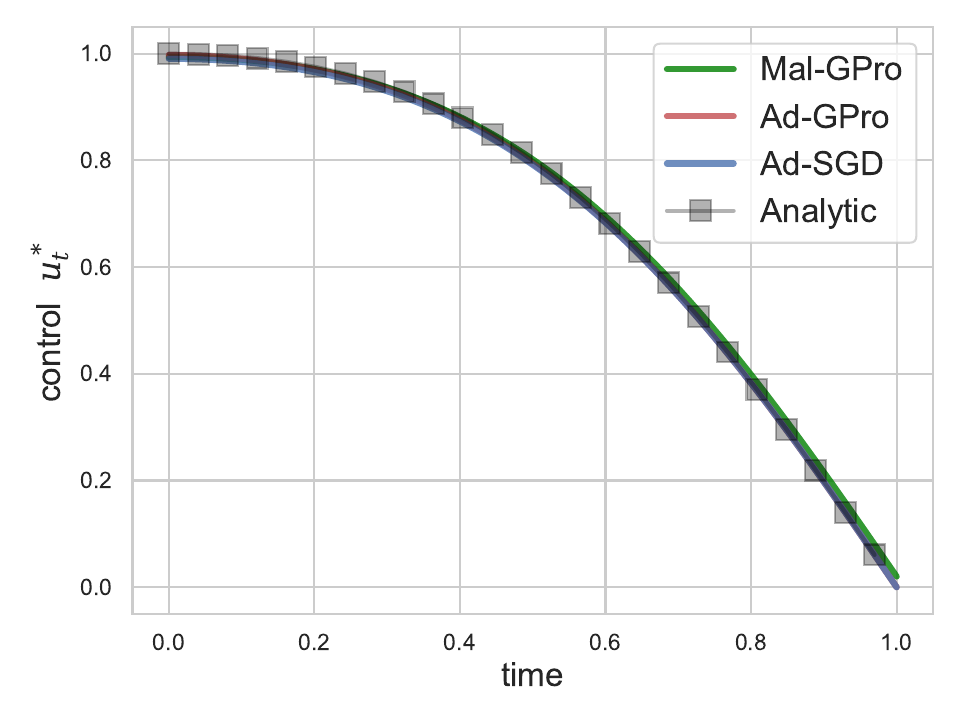}
	}
	\subcaptionbox{\label{Fig:scalar-exp1-error}}
	{
		\begin{tabular}{l l}
			\toprule
			Model & Control Error $\mathcal{E}_c$\\
			\cmidrule(lr){1-1}\cmidrule(lr){2-2}
			Ad-GPro,  \scalebox{0.8}{($\Delta t = 10^{-2}$)}	 			 &  $2.2 \times 10^{-5} \pm 2 \times 10^{-8}$ \\
			Ad-SGD,   \scalebox{0.8}{($\Delta t = 10^{-2}$)} 		 		 &  $5.5 \times 10^{-5} \pm 7 \times 10^{-7}$ \vspace{3pt}\\ 
			Mal-GPro, \scalebox{0.8}{($\Delta t = 10^{-2}$)} 				 &  $8.9 \times 10^{-5} \pm 1 \times 10^{-7}$  \\ 
			Mal-GPro, \scalebox{0.8}{($\Delta t = 5\!\times\!10^{-3}$)}  	 &  $1.7 \times 10^{-5} \pm 6 \times 10^{-8}$ \\
			\bottomrule
		\end{tabular}
	}
	\caption{
		The results of our Mal-GPro algorithm compared to the baseline algorithms, Ad-SGD and Ad-GPro, 
		for problem (\ref{Eq:scalar-exp1}).	
		(a) shows the optimum value of objective, 
		and (b) shows the optimum control decision. 
		Clearly, all of the considered algorithm can properly learn the control decision
		and their respective solutions match with the analytical one.
		(c) tabulates the control error $\mathcal{E}_c$ 
		(integral of L2-norm of difference between the obtained control and the analytical one) 
		of different schemes; Mal-GPro algorithm needs smaller step-size to reach a level of error comparable to  Ad-SGD and Ad-GPro.
	}
	\label{Fig:Exp1}
\end{figure}
\cref{Fig:Exp1} shows the results of our Mal-GPro algorithm compared to the baseline algorithms, Ad-SGD and Ad-GPro, 
for this problem.
Based on \cref{Fig:scalar-exp1-objective}, all of the considered algorithms
show the same behavior from the perspectives of performance and sample-efficiency (the update iterations it takes to converge).
Moreover, all of the algorithms can properly learn the control decision
and their respective solutions match with the analytical solution.
We also evaluate the control error of different schemes defined as below.
\begingroup\makeatletter\def\f@size{8.5}\check@mathfonts 
\begin{align}\label{Eq:control-error}
	\mathcal{E}_c = \int_0^T \| \textbf{u}_t^*  - \textbf{u}_t^{\rm a} \|^2 dt,
\end{align}
\endgroup
where $\textbf{u}_t^*$ is the optimal control obtained by considered algorithms (Mal-GPro, Ad-GPro and Ad-SGD), 
and $\textbf{u}_t^{\rm a}$ is the analytical control decision.
Table~\ref{Fig:scalar-exp1-error} shows the control error $\mathcal{E}_c$ of different schemes. 
Mal-GPro algorithm needs smaller step-size to reach an extent of control error comparable to  Ad-SGD and Ad-GPro.

\noindent\textbf{Experiment 2:}
We now consider  the following optimal control problem  \cite{du2013effective}.
\begingroup\makeatletter\def\f@size{8.5}\check@mathfonts 
\begin{align}\label{Eq:scalar-exp2}
	&\min_{u\in\mathcal{U}} J(u,x_0) = \frac{1}{2} \int_0^T \mathbb{E}[(x_t - 1)^2] dt + \frac{1}{2} \int_0^T u_t^2 dt, \notag \\
	&\text{s.t.} \quad dx_t = u_tx_tdt + \sigma \sqrt{1+x_t^2}dw_t, 
\end{align}
\endgroup
with given $x_0=1$, $T=1$ and $\sigma=0.5$ as the diffusion coefficient.
However, the analytical solution of this problem is not known.

\begin{figure}[t!]
	\centering
	\subcaptionbox{\label{Fig:scalar-exp2-objective}}
	{
		\includegraphics[width=0.35\textwidth]{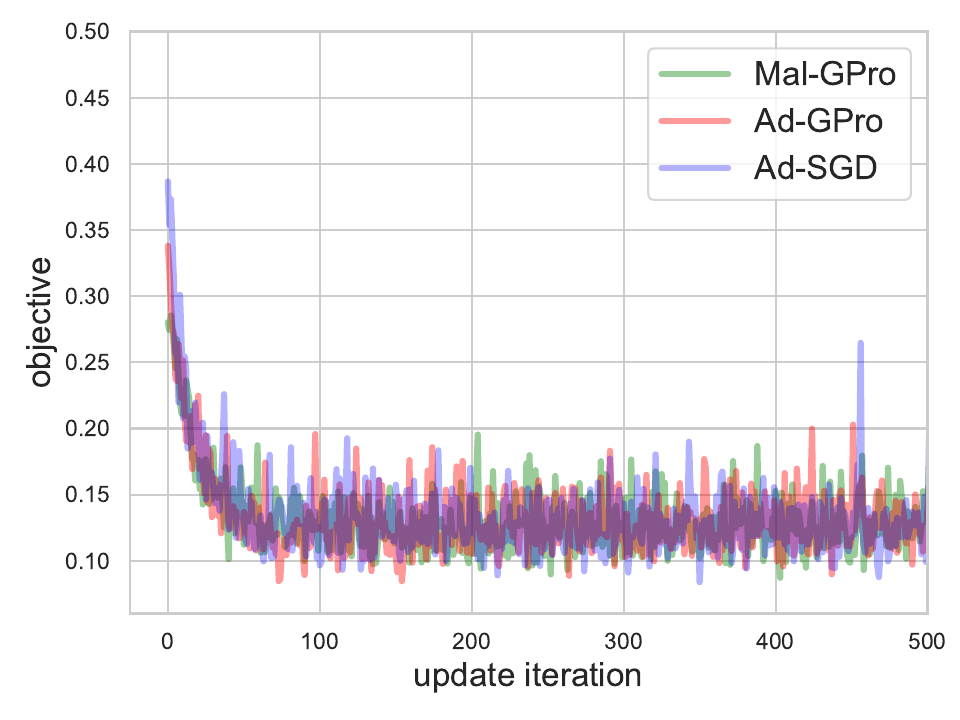}
	}
	\hspace{12 pt}
	\subcaptionbox{\label{Fig:scalar-exp2-control}}
	{
		\includegraphics[width=0.35\textwidth]{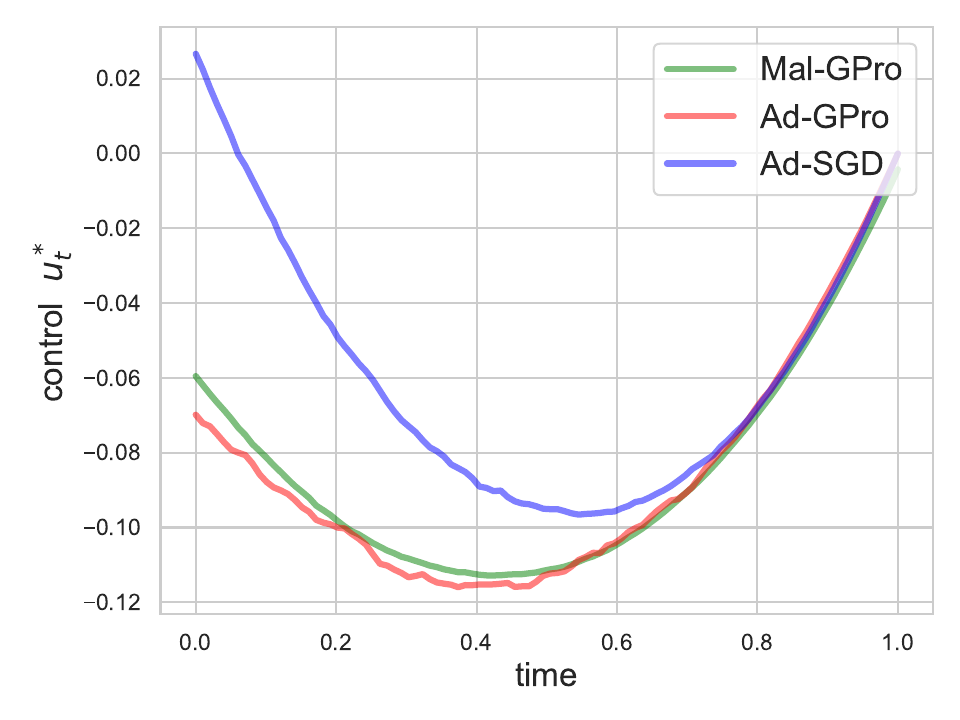}
	}
	\caption{The results of our Mal-GPro algorithm compared to the baseline algorithms, Ad-SGD and Ad-GPro, 
		for problem (\ref{Eq:scalar-exp2}).	
		(a) shows the optimum value of objective and
		(b) the optimum control decision. 
		While Ad-SGD cannot provide a proper solution, 
		the control trajectories of Mal-GPro and Ad-GPro approximately coincide.
		The control trajectory produced by Ad-GPro appears non-smooth
		due to the inherent limited capacity of the parametric approximation used in simulating the adjoint process.
	}
	\label{Fig:Exp2}
\end{figure}
\cref{Fig:Exp2} shows the results of Mal-GPro algorithm compared to  Ad-SGD and Ad-GPro algorithms, 
for this problem.
According to \cref{Fig:scalar-exp2-control}, the control trajectories of Mal-GPro and Ad-GPro approximately coincide,
while the solution of Ad-SGD deviates from them.
The reason is that Ad-SGD provides an approximation mechanism 
and it is not guaranteed to obtain the optimal solution \cite{archibald2020stochastic}.
Note that, the control trajectory produced by Ad-GPro appears non-smooth
due to the inherent limited capacity of the parametric approximation used in simulating the adjoint process.
These results portrays the effectiveness of Mal-GPro for solving SOCPs.

\subsection{Vector SOCPs}
\noindent \textbf{Algorithms Setup}:
For all of the considered algorithms, 
we set the number of batches to $100$,
and the learning rate to $\lambda \in [10^{-2}, 3\times 10^{-2}]$.
For the Ad-GPro, we use a neural network parameterization
to estimate the adjoint\,/\,control processes $(\textbf{y}_t,\{\textbf{z}_t^i\}_{i=1}^l)$.
For this, we apply 3 hidden layers with 16 neurons and \emph{tanh} activation function.

\noindent\textbf{Experiment 1:}
We consider the following SOCP with 3-dimensional controlled process \cite{archibald2020stochastic}
\begingroup\makeatletter\def\f@size{8.5}\check@mathfonts
\begin{align}\label{Eq:vector-exp1}
	&\min_{u\in\mathcal{U}} J(u,\textbf{x}_0) = \frac12 \left[ \int_0^T \mathbb{E}\Big\{ (\textbf{x}_t - \textbf{x}_t^*)^\top \textbf{C} (\textbf{x}_t - \textbf{x}_t^*) \Big\} dt + \int_0^T u_t^2 dt \right], \nonumber \\
	&\text{s.t.} \quad 	d\textbf{x}_t = (u_t \textbf{1} - \mathbf{r}_t)dt + \textbf{I} d\textbf{w}_t, \qquad \textbf{x}_t \in \mathbb{R}^3,
\end{align}
\endgroup
with $\textbf{x}_0=-\textbf{1}$,
where $\textbf{1} = (1,1,1)^\top$, $u_t \in \mathbb{R}$ is the control process,  
$\mathbf{r}_t = \frac{t}{2} \textbf{1}$,
$\mathbf{C} = \begin{pmatrix}
	3 & 0 & 0 \\ 0 & 1 & 0 \\ 0 & 0 & 2
\end{pmatrix}$
and $ \textbf{x}_t^* =  (3Tt - \frac{t^2}{2}) \textbf{1} + \left(-\frac12, 0, 1 \right)^\top $.
It has the following analytical solution for the control decision.
$$
u_t^a = 3T - t/2 - \frac{ 2.5 \cosh(\sqrt{6} t) + \sqrt{6} \sinh(\sqrt{6} (t - T)) } { \cosh(\sqrt{6} T) }
$$

\begin{figure}[t!]
	\centering
	\subcaptionbox{\label{Fig:vector-exp1-objective}}
	{
		\includegraphics[width=0.35\textwidth]{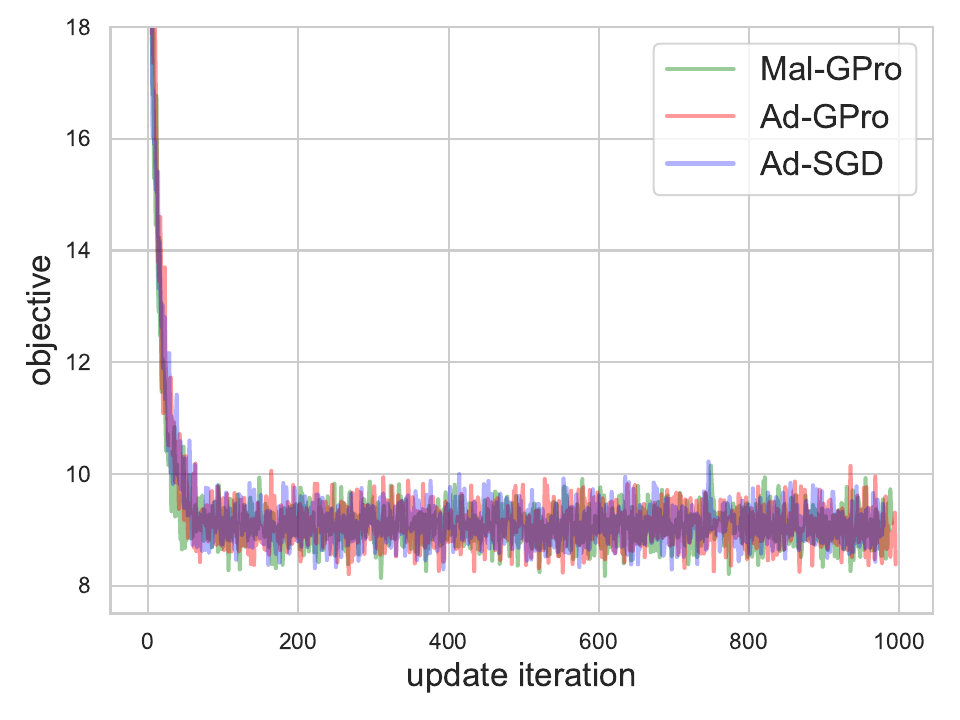}
	}
	\hspace{12 pt}
	\subcaptionbox{\label{Fig:vector-exp1-control}}
	{
		\includegraphics[width=0.35\textwidth]{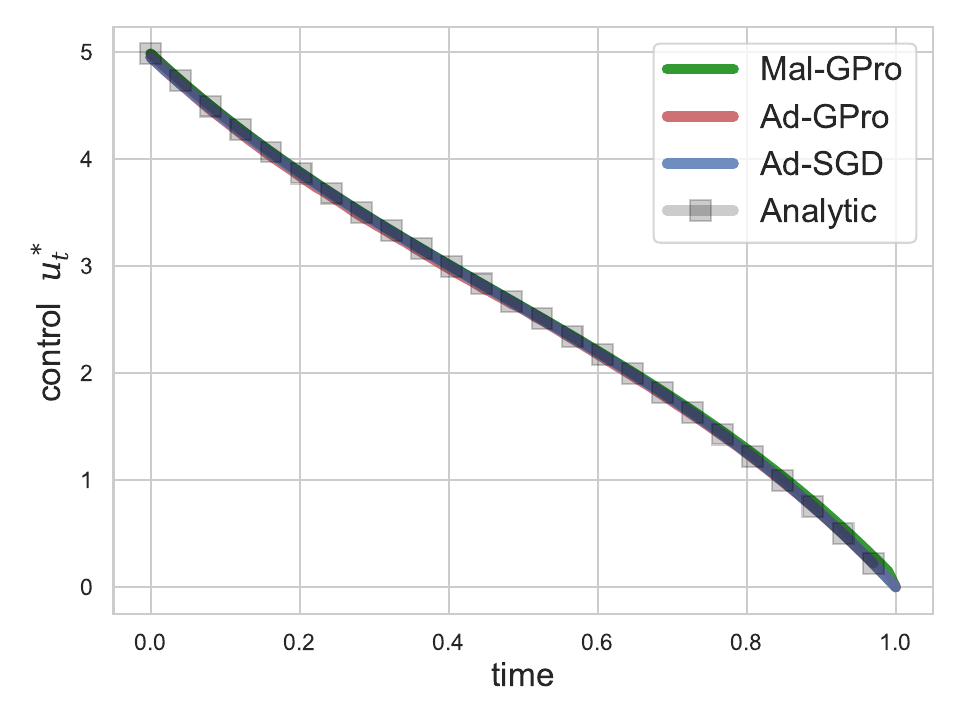}
	}
	\subcaptionbox{\label{Fig:vector-exp1-error}}
	{
		\begin{tabular}{l l}
			\toprule
			Model & Control Error $\mathcal{E}_c$\\
			\cmidrule(lr){1-1}\cmidrule(lr){2-2}
			Ad-GPro,  \scalebox{0.8}{($\Delta t = 10^{-2}$)}	 			 &  $8.2 \times 10^{-4} \pm 1.3 \times 10^{-4}$ \\
			Ad-SGD,   \scalebox{0.8}{($\Delta t = 10^{-2}$)} 		 		 &  $2.2 \times 10^{-4} \pm 1.5\times10^{-4}$ \vspace{3pt}\\ 
			Mal-GPro, \scalebox{0.8}{($\Delta t = 10^{-2}$)} 				 &  $6.5 \times 10^{-4} \pm 9\times10^{-5} $  \\ 
			Mal-GPro, \scalebox{0.8}{($\Delta t = 5\!\times\!10^{-3}$)}  	 &  $2.5 \times 10^{-4} \pm 1 \times 10^{-4}$ \\
			\bottomrule
		\end{tabular}
	}
	\caption{The results of our Mal-GPro algorithm compared to the baseline algorithms, Ad-SGD and Ad-GPro, 
		for problem (\ref{Eq:vector-exp1}).	
		(a) shows the optimum value of objective and 
		(b) the optimum control decision. 
		Clearly, all of the considered algorithm can properly learn the control decision
		and their respective solutions match with the analytical one.
		(c) tabulates the control error $\mathcal{E}_c$
		of different schemes; Mal-GPro algorithm needs smaller step-size to reach a level of error comparable to  Ad-SGD and Ad-GPro.
	}
	\label{Fig:vector-Exp1}
\end{figure}

\cref{Fig:vector-Exp1} shows the results of our Mal-GPro algorithm compared to the baseline algorithms, Ad-SGD and Ad-GPro.
Clearly, all of the considered algorithm 
have the same behavior from performance and sample-efficiency viewpoints, 
and can properly learn control decisions matching with the analytical solution.
\cref{Fig:vector-exp1-error} tabulates the \emph{control error}
of different schemes; 
Mal-GPro algorithm needs smaller step-size to reach a level of error comparable to  Ad-SGD.

\noindent\textbf{Experiment 2:}
We now consider the following vector problem \cite{Mohamed2022}:
\begingroup\makeatletter\def\f@size{8.5}\check@mathfonts
\begin{align}
	\label{Eq:vector-exp2}
	&\min_{\textbf{u}\in\mathcal{U}}  J(\textbf{u},\textbf{x}_0) := \mathbb{E}
	\left\{ 
	\int_0^T (\textbf{x}_t^\top \textbf{R}\textbf{x}_t + \textbf{u}_t^\top \textbf{C} \textbf{u}_t) dt 
	\right\} \notag \\
	&~~\text{s.t.}~~ d\textbf{x}_t =\textbf{a}(\textbf{x}_t, \textbf{u}_t) dt + \textbf{b}(\textbf{x}_t,\textbf{u}_t) dw_t,
\end{align}
\endgroup
where 
$\textbf{x}_0 = -\textbf{1}$, $T=1$,
$\textbf{R}=\begin{pmatrix}
	1 & 0 \\ 0 & 2
\end{pmatrix}$, 
$\textbf{C}=\begin{pmatrix}
	1 & 0 \\ 0 & 4
\end{pmatrix}$, 
$\textbf{a}(\textbf{x}_t,\textbf{u}_t)=\begin{pmatrix}
	-x_{t,0} -2x_{t,1}^2 - x_{t,2}^3/2 + u_{t,0} \\ -\cos(x_{t,1}) + u_{t,1}
\end{pmatrix}$,
$\textbf{b}(\textbf{x}_t,\textbf{u}_t)=\begin{pmatrix}
	0.4 + u_{t,0} \\ 0.2 + 2u_{t,1}
\end{pmatrix}$ 
and $\mathcal{U} = \{\textbf{u} \in U \subseteq \mathbb{R}^2 \:|\:-1 \leq u_i \leq 1,\: i\in\{0,1\} \}$.

\begin{figure}[t!]
	\centering
	\hspace{-12 pt}
	\subcaptionbox{\label{Fig:vector-exp2-action-MalGPro}}
	{
		\includegraphics[width=0.33\textwidth]{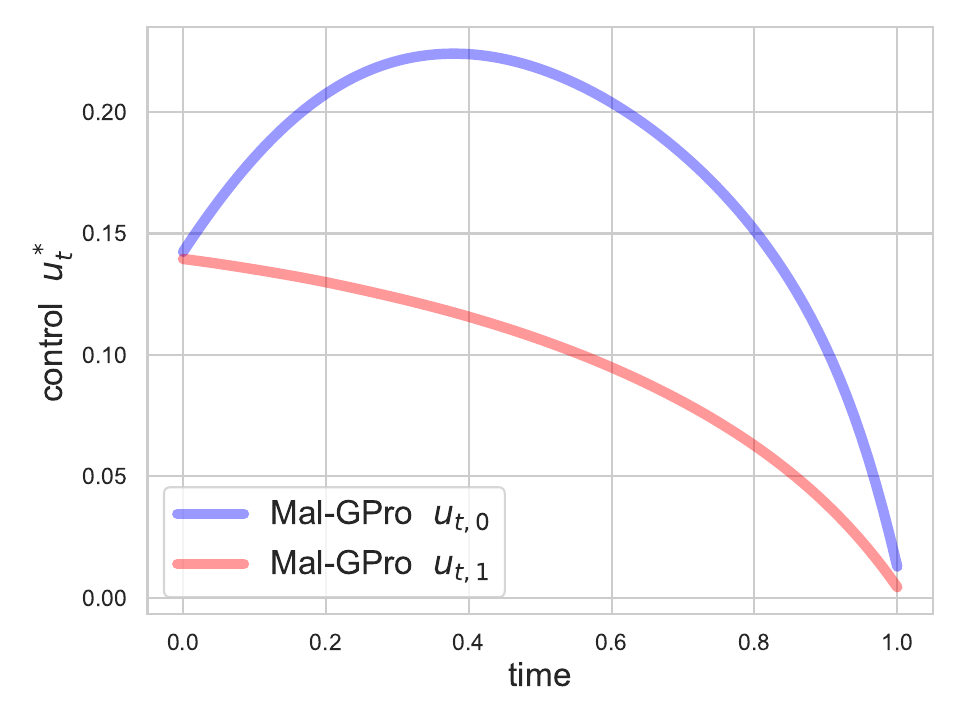}
	}
	\hspace{-8 pt}
	\subcaptionbox{\label{Fig:vector-exp2-action-AdSGD}}
	{
		\includegraphics[width=0.33\textwidth]{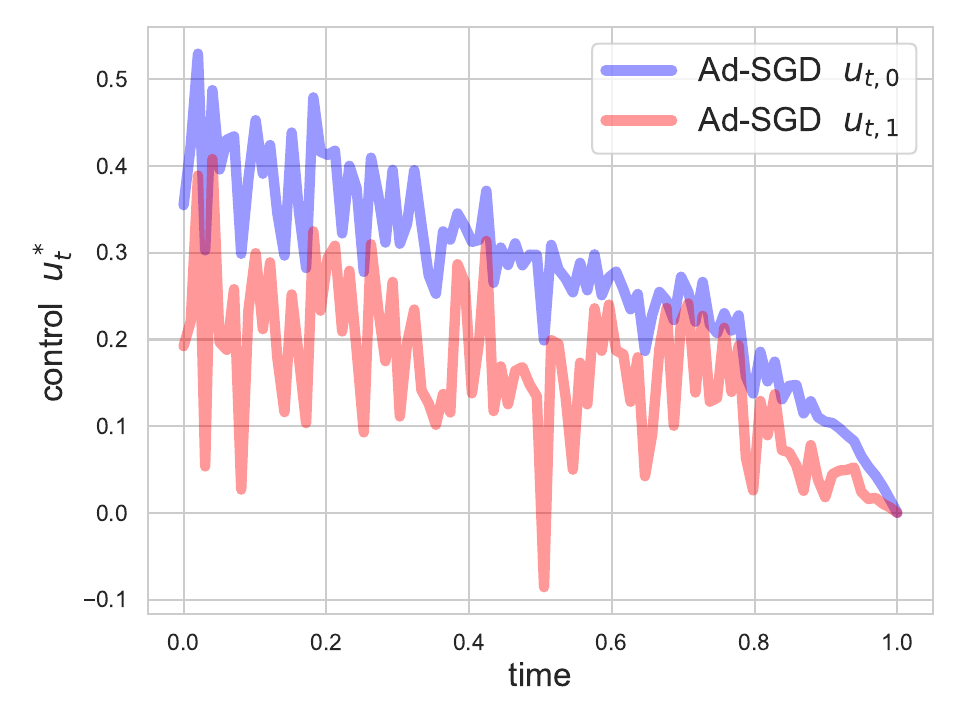}
	}
	\hspace{-8 pt}
	\subcaptionbox{\label{Fig:vector-exp2-action-AdGPro}}
	{
		\includegraphics[width=0.33\textwidth]{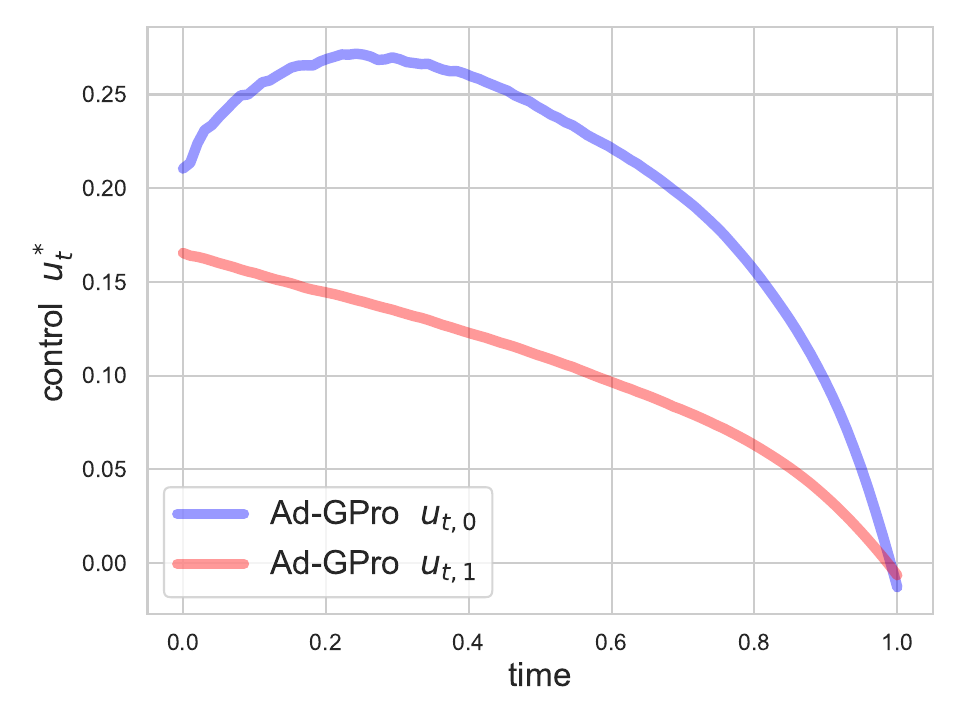}
	}
	\label{Fig:vector-exp2}
	\caption{The performance of Mal-GPro  algorithms on SOCP~(\ref{Eq:vector-exp2}), 
		compared to the baseline approaches Ad-GPro and Ad-SGD.
		(a) shows the control vector by Mal-GPro, 
		(b) the control vector found by Ad-SGD, and
		(c) the control vector by Ad-GPro.
		In contrast to Mal-GPro, Ad-SGD cannot properly find the optimum control decision.}
\end{figure}
\cref{Fig:vector-exp2-action-MalGPro,Fig:vector-exp2-action-AdSGD,Fig:vector-exp2-action-AdGPro} illustrate
the obtained control decision for Ad-SGD, Mal-GPro and Ad-GPro, respectively, on SOCP~(\ref{Eq:vector-exp2}).
In contrast to Mal-GPro, Ad-SGD cannot properly find the optimum control decision.
There is slight difference between control solutions of Mal-GPro and Ad-GPro
due to the limited capacity of the parametric methodology in Ad-GPro.
It again shows the effectiveness of Mal-GPro in obtaining optimal solution for SOCPs,
compared to the baseline algorithms.

\noindent\textbf{Experiment 3:}
We now consider a Linear-Quadratic (LQ) control problem as a high-dimensional SOCP benchmark \citep{Pham2009,SunYong2020}:
\begin{align}
	\label{Eq:vector-exp3}
	&\min_{\textbf{u}\in\mathcal{U}}  J(\textbf{u},\textbf{x}_0) := \mathbb{E}
	\left\{ 
	\frac12 \int_0^T (\textbf{x}_t^\top \textbf{Q}\textbf{x}_t + \textbf{u}_t^\top \textbf{R} \textbf{u}_t) dt 
	\right\} \notag \\
	&~~\text{s.t.}~~ d\textbf{x}_t = \big(\textbf{A}\:\textbf{x}_t  + \textbf{B} \textbf{u}_t \big)dt 
	+ \boldsymbol{\Sigma}\:  d\textbf{w}_t,
\end{align}
For this experiment, we set $n = k = 10$, 
$\textbf{A} = -0.5 \textbf{U}_1$ and $\textbf{B} = \textbf{U}_2$, 
where $\{\textbf{U}_i\}_{i=1}^2$ are matrices with elements uniformly selected within $[0,1]$. 
Moreover, $\textbf{Q} = \textbf{I}$, 
$\textbf{R} = 0.1 \textbf{I}$,
and $\boldsymbol{\Sigma} = \nu  \textbf{I}$
where $\nu= 0.3$ is the volatility coefficient \citep{ControlMatching2024}.
Note that this problem, known as linear quadratic regulator (LQR), have an analytical control solution
that can be obtained based on the Riccati equation \citep{SunYong2020}.

\begin{figure}[t!]
	\centering
	\subcaptionbox{\label{Fig:vector-exp3-Analyticalcontrol}}
	{
		\includegraphics[width=0.3\textwidth]{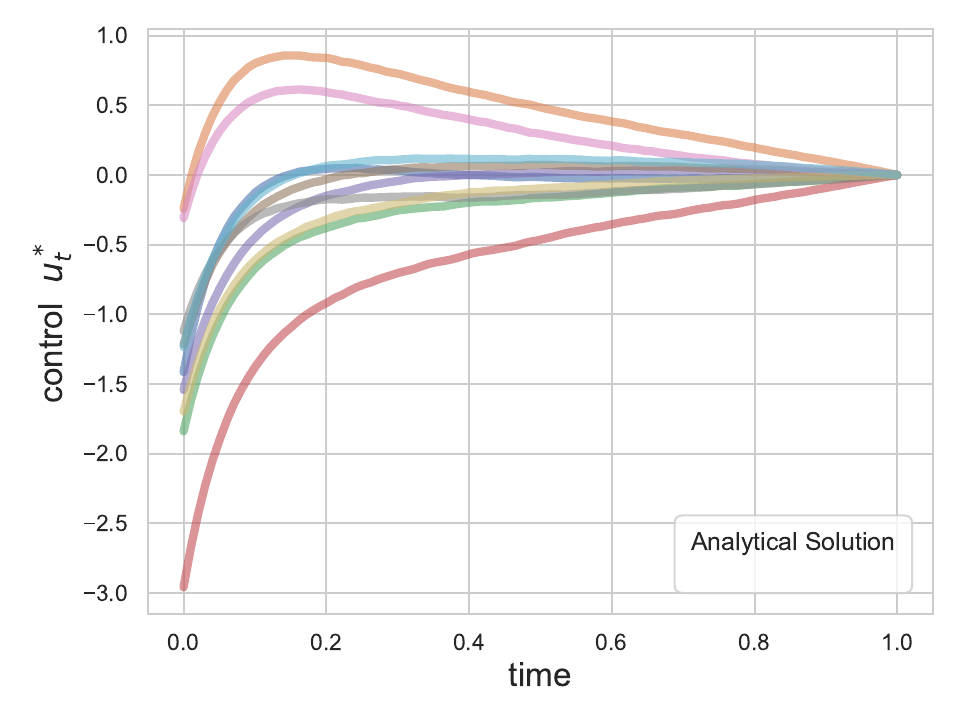}
	}
	\subcaptionbox{\label{Fig:vector-exp3-MallGcontrol}}
	{
		\includegraphics[width=0.3\textwidth]{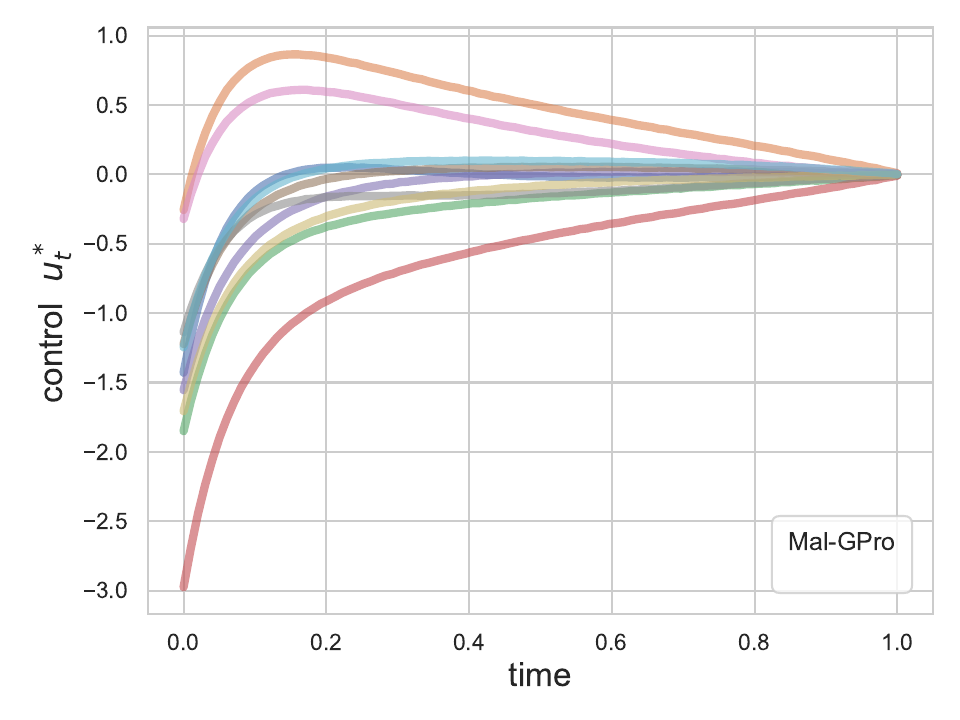}
	}
	\subcaptionbox{\label{Fig:vector-exp3-SGDcontrol}}
	{
		\includegraphics[width=0.3\textwidth]{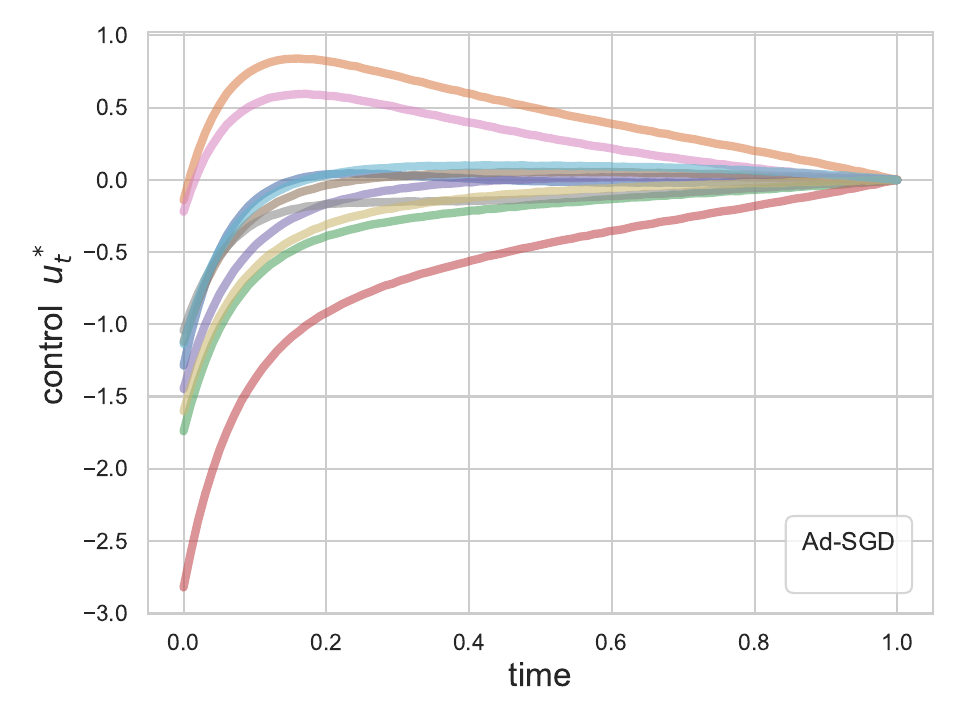}
	}
	\subcaptionbox{\label{Fig:vector-exp3-error}}
	{
		\small
		\begin{tabular}{l l}
			\toprule
			Model & Control Error $\mathcal{E}_c$\\
			\cmidrule(lr){1-1}\cmidrule(lr){2-2}
			Ad-SGD    		 		 &  $3.1 \times 10^{-3} \pm 1.0\times10^{-5}$ \vspace{3pt}\\ 
			Mal-GPro 				 &  $8.7 \times 10^{-4} \pm 4.6\times10^{-6} $  \\ 
			\bottomrule
		\end{tabular}
		\small
	}
	\caption{The results of our Mal-GPro algorithm compared to the baseline algorithm Ad-SGD 
		for problem (\ref{Eq:vector-exp3}) with $\Delta t = 10^{-2}$.	
		(a) shows the analytical control solution,
		and 
		(b)-(c) show the  control decisions
		obtained   by Mal-GPro and Ad-SGD. 
		Clearly, all of them find the same control solutions.
		(d) tabulates the control error $\mathcal{E}_c$
		of Mal-GPro against Ad-SGD.
		Our algorithm develops lower control error with less variance compared to Ad-SGD for this problem.
	}
	\label{Fig:vector-exp3}
\end{figure}
Although, we do not evaluate Ad-GPro  due to its computational complexity,
the analytical solution can be obtained as a benchmark for this problem.
\cref{Fig:vector-exp3} illustrates the analytical control decision and learned decisions of Mal-GPro algorithm and baseline Ad-SGD on SOCP~(\ref{Eq:vector-exp3}).
Clearly, all of them find the same control decision.
However, \cref{Fig:vector-exp3-error} tabulates the control error $\mathcal{E}_c$
of Mal-GPro against Ad-SGD.
Our algorithm develops lower error and with less variance compared to the Ad-SGD for this problem.

\begin{table}[t]
	\centering
	\caption[Average per-iteration runtime]{Average runtime per iteration update (seconds) 
		for our method Mal-GPro versus baselines Ad-GPro and Ad-SGD evaluated on SOCPs (\ref{Eq:vector-exp1}),(\ref{Eq:vector-exp2})
		and (\ref{Eq:vector-exp3}) with $n=k=5$.}
	\label{Tab:runtime}
	\small
	\begin{tabular}{@{} l S[table-format=2.2] S[table-format=2.2] S[table-format=2.2] @{}}
		\toprule
		Problem & {Mal-GPro} & {Ad-SGD} & {Ad-GPro} \\
		\midrule
		SOCP~(\ref{Eq:vector-exp1}) & 0.5 & 0.1 & 12.0 \\
		SOCP~(\ref{Eq:vector-exp2}) & 0.3 & 0.1 & 11.0 \\
		SOCP~(\ref{Eq:vector-exp3}) & 0.8 & 0.1 & 26.6 \\
		\bottomrule
	\end{tabular}
	
	\vspace{1ex}
	\footnotesize
	\textbf{Note:} Reported values are per-iteration averages computed over independent runs (mean over iterations). 
	Measurements were taken on CPU \texttt{Core i5-13600H} with 32 GB RAM.
\end{table}
\noindent \textbf{Average runtime:}
We now benchmark different algorithms from average runtime perspective, i.e., the average time required for methods run one iteration update.
For this, we consider experiments in \cref{Eq:vector-exp1,Eq:vector-exp2,Eq:vector-exp3}.
\cref{Tab:runtime} compares our algorithm Mal-GPro against baseline methods Ad-GPro and Ad-SGD in terms of average runtime metric.
Mal-GPro and Ad-SGD are time-efficient
while Ad-GPro is considerably more time-consuming.
Ad-SGD has been able to perform with least runtime
by sacrificing the precision as the results in \cref{Fig:scalar-exp2-control,Fig:vector-exp2-action-AdSGD} illustrate.

These results altogether show the effectiveness of Mal-GPro compared to baseline methods.
Specifically, it highlights its efficiency against Ad-SGD from the performance viewpoint
as well as against Ad-GPro from the computational complexity perspective.

\section{Conclusion}
In this paper, we leveraged Malliavin calculus to derive a stochastic maximum principle
for stochastic optimal control problems (SOCPs).
Accordingly, we derived a gradient projection algorithm to sequentially solve some classes of SOCPs.
The experimental results show that our algorithm,
at the same time can provide competitive performance against the standard iterative algorithms,
and does not suffer from the dimensionality challenge of the standard approaches.
The reason is that our algorithm replaces the need of backward SDEs with a Malliavin derivative
that hopefully can be computed accurately based on a forward simulator. 

\bibliography{malliavin.bib}
\bibliographystyle{icml2024}

\newpage
\appendix
\onecolumn

\section{Proof of \cref{Thm1}}
\label{App1}
To prove \cref{Thm1}, we need the following Lemma:
\begin{lemma}\label{Lemma1}
	Consider the diffusion process $X=(x_t)_{t\in[0,T]}$ following the SDE~(\ref{Eq:SDE}) with $n=k=1$, i.e.,
	$$
	dx_t = a(x_t,u_t)dt+ b(x_t,u_t) dw_t.
	$$
	The solution of the variational SDE
	\begin{align}
	\label{Eq:Aux1}
	d\delta_t = \big( a_x(x_t,u_t)\delta_t +  a_u(x_t,u_t)v_t \big)dt + \big( b_x(x_t,u_t)\delta_t +  b_u(x_t,u_t)v_t \big)dw_t.
	\end{align}
	with $\delta_0=0$ is then expressed by
	$$
	\delta_t = \int_0^t \frac{D_sx_t}{b(x_s,u_s)} \big( a_u(x_s,u_s)-b_x(x_s,u_s)\:b_u(x_s,u_s) \big)\:v_s\:ds 
				+\int_0^t \frac{D_sx_t}{b(x_s,u_s)} b_u(x_s,u_s)\:v_s\:dw_s .
	$$
	\begin{proof}
		We first consider a SDE with the form of
		\begin{align}
		\label{Eq:Aux2}
		d\eta_t =  a_x(x_t,u_t)\eta_t  \: dt +  b_x(x_t,u_t)\eta_t \: dw_t.
		\end{align}
		By considering the variable transformation $e_t = \log(\eta_t)$ and applying Ito's formula, 
		we can simply obtain the solution of $\eta_t$ as
		\begin{align}
		\label{Eq:eta}
			\eta_t = \exp\bigg( \int_0^t \big( a_x(x_t,u_t)-\frac12 b_x(x_t,u_t )^2 \big) dt + \int_0^t b_x(x_t,u_t) dw_t \bigg) > 0.
		\end{align}
		Based on the variation-of-constant approach, the solution of $\delta_t$ can be obtained using 
		\begin{align}\label{Eq:constant-of-variation}
			\delta_t = \eta_t q_t,
		\end{align}
		where $q_t$ should be found.
		For $b_u = 0$, Ito's formula and some mathematical manipulations
		show that $dq_t = \eta_t^{-1}(a_u(x_t,u_t)v_t\:dt + b_u(x_t,u_t)v_t \:dw_t)$.
		However, for $b_u \ne 0$, we should set: 
		\begin{align}
		\label{Eq:Aux3}
		\eta_t\:dq_t = (a_u(x_t,u_t)v_t + \gamma_t)\:dt + (b_u(x_t,u_t)v_t+\mu_t) \:dw_t,
		\end{align}
		where $\gamma_t$ and $\mu_t$ should be determined so that the dt-terms match after including the quadratic covariation.
		To obtain them, \cref{Eq:constant-of-variation} gives:
		$$
		d\delta_t = d\eta_t\:q_t + \eta_t\:dq_t + d\eta_t\:dq_t,
		$$
		which accordingly gives the following from \cref{Eq:Aux1,Eq:Aux2,Eq:Aux3}:
		\begin{align*}
		&\big( {\color{darkred}\cancel{a_x(x_t,u_t)\delta_t}} +  {\color{blue}\cancel{a_u(x_t,u_t)v_t}} \big)dt 
		+ \big( {\color{purple}\cancel{b_x(x_t,u_t)\delta_t}} +  {\color{darkgreen}\cancel{b_u(x_t,u_t)v_t}} \big)dw_t \\
		&\qquad=  (a_x(x_t,u_t)\eta_t  \: dt +  b_x(x_t,u_t)\eta_t \: dw_t)q_t
		 + (a_u(x_t,u_t)v_t + \gamma_t)\:dt + (b_u(x_t,u_t)v_t+\mu_t) \:dw_t\\
		 &\qquad~~+ b_x(x_t,u_t)\eta_tdw_t\:\eta_t^{-1}(b_u(x_t,u_t)v_t + \mu_t)dw_t \\
		&\qquad=  ( {\color{darkred}\cancel{a_x(x_t,u_t)\delta_t}}  \: dt +  {\color{purple}\cancel{b_x(x_t,u_t)\delta_t}} \: dw_t)
		+ ( {\color{blue}\cancel{a_u(x_t,u_t)v_t}} + \gamma_t)\:dt + ({\color{darkgreen}\cancel{b_u(x_t,u_t)v_t}}+\mu_t) \:dw_t \\
		&\qquad~~+ b_x(x_t,u_t)(b_u(x_t,u_t)v_t + \mu_t)dt.
		\end{align*}
		By canceling the similar terms, we then obtain:
		$$
		\gamma_t = -b_x(x_t,u_t) b_u(x_t,u_t) v_t,\qquad\mu_t=0.
		$$
		Considering that $q_0 = \eta_0^{-1}\delta_0 = 0$, we finally get:
		\begin{align}
		\label{Eq:delta}
		\delta_t = \eta_t q_t = \int_0^t \eta_t\eta_s^{-1} \big( a_u(x_s,u_s)-b_x(x_s,u_s)\:b_u(x_s,u_s) \big)\:v_s\:ds 
		+\int_0^t \eta_t\eta_s^{-1} b_u(x_s,u_s)\:v_s\:dw_s .
		\end{align}
		Now, based on \cref{Eq:eta}, one can get:
		$$
		\eta_t\eta_s^{-1}  = \exp\bigg( \int_s^t \big( a_x(x_t,u_t)-\frac12 b_x(x_t,u_t )^2 \big) dt + \int_s^t b_x(x_t,u_t) dw_t \bigg),
		$$
		which gives $\eta_t\eta_s^{-1} = \frac{D_sx_t}{b(x_s,u_s)}$  based on \cref{Eq:D_sx_t}.
		By substituting this into \cref{Eq:delta}, the statement thus follows.
	\end{proof}
\end{lemma}

We now present \cref{Thm1} with a proof.
Note that without loss of generalization, we assume that $h(x_T) = 0$.
\begin{theorem}\label{Thm1_app}
	Consider SOCP $\mbox{P}_1$ (\ref{EQ:StochasticControl}) with $n=k=1$,
	the variation of cost functional $J(u,x_0)$
	along direction $v$, through Definition~(\ref{Def1}), can be obtained by:
	\begingroup\makeatletter\def\f@size{9}\check@mathfonts
	\begin{align*}
	\delta_v J(u,x_0) 
	= \int_0^T \mathbb{E}
	\Big\{ &
	\int_s^T \Big( \frac{a_u(s) - b_x(s)b_u(s)}{b(s)} L_x(t) D_sx_t \\
	&~~+  \frac{b_u(s)}{b(s)} \big( L_{xx}(t) D_sx_t + L_{xu}(t) D_su_t \big) D_sx_t\Big) dt +L_u(s) 
	\Big\} v_s ds,
	\end{align*}
	\endgroup
	where $D_sx_t$ is the Malliavin derivative of $x_t$ against $w_s$,
	$L_x(t)$, $a_u(s)$, and $b(s)$ are short notations 
	for $L_x(x_t,u_t,t)$, $a_u(x_s,u_s)$ and $b(x_s,u_s)$, respectively,
	with $L_x$ and $L_{xx}$ denoting the first and second gradients of $L$ 
	w.r.t $x$, and
	$L_u$, $a_u$, $b_u$ showing the gradient of $L$, $a$ and $b$ w.r.t. $u$.
	\begin{proof}
		We perturb the control process $u_t$ through $u_t+  \epsilon v_t$, 
		which leads to the diffusion process $x_t^{u_t+  \epsilon v_t}$.
		We then define a variational process:
		$$
		\delta_t^x := \lim_{\epsilon \to 0}\frac{1}{\epsilon}(x_t^{u+  \epsilon v} - x_t^{u}).
		$$
		Based on \cref{Eq:SDE}, $\delta_t^x $ obeys the following variational SDE:
		\begin{align}\label{Eq:variationa_x}
		d\delta_t^x = \big( a_x(x_t,u_t)\delta_t^x +  a_u(x_t,u_t)v_t \big)dt + \big( b_x(x_t,u_t)\delta_t^x +  b_u(x_t,u_t)v_t \big)dw_t.
		\end{align}
		Using Lemma (\ref{Lemma1}), the solution of this variational SDE is:
		\begin{align}\label{Eq:solution-delta_x}
		\delta_t^x = \int_0^t \frac{D_sx_t}{b(x_s,u_s)} (a_u(x_s,u_s)-b_x(x_s,u_s)\:b_u(x_s,u_s))\:v_s\:ds 
		            +\int_0^t \frac{D_sx_t}{b(x_s,u_s)} b_u(x_s,u_s)\:v_s\:dw_s .
		\end{align}
		On the other hand, the variation of cost functional $J(u,x_0)$ along direction $v$ obeys:
		\begin{align}\label{Eq:variation-of-cost}
		\delta_vJ(u,x_0) = \mathbb{E}\Big\{ \int_0^T  \big(L_x(x_t,u_t,t) \delta_t^x + L_u(x_t,u_t,t) v_t\big) dt\Big\} .
		\end{align}
		By plugging \cref{Eq:solution-delta_x} into \cref{Eq:variation-of-cost}, we then have:
		\begin{align*}
		\delta_vJ(u,x_0) =& \mathbb{E} \int_0^T L_x(x_t,u_t,t) \int_0^t \frac{D_sx_t}{b(x_s,u_s)} \big( a_u(x_s,u_s) - b_x(x_s,u_s)b_u(x_s,u_s) \big) v_s ds \: dt \\
					      &+\mathbb{E} \int_0^T L_x(x_t,u_t,t) \int_0^t \frac{D_sx_t}{b(x_s,u_s)} b_u(x_s,u_s) v_s dw_s \: dt			      
					       +\mathbb{E} \int_0^T L_u(x_t,u_t,t)  v_t dt.
		\end{align*}
		We then leverage the integration-by-parts~(\ref{Eq:integration-by-parts}) for the second term in RHS of recent equation:
		\begin{align*}
			\delta_vJ(u,x_0) =& \mathbb{E} \int_0^T \int_0^t L_x(x_t,u_t,t)\frac{D_sx_t}{b(x_s,u_s)} (a_u(x_s,u_s) - b_x(x_s,u_s)b_u(x_s,u_s))\: v_s \: ds  \: dt \\
			&+\mathbb{E} \int_0^T \int_0^t D_sL_{x}(x_t,u_t,t) \frac{D_sx_t}{b(x_s,u_s)} b_u(x_s,u_s)\: v_s \: ds  \: dt			      
			+\mathbb{E} \int_0^T L_u(x_t,u_t,t)  v_t dt.
		\end{align*}		
		Changing the order of integrals then gives:
		\begin{align*}
		\delta_vJ(u,x_0) =& \mathbb{E} \int_0^T \int_s^T L_x(x_t,u_t,t)\frac{D_sx_t}{b(x_s,u_s)} (a_u(x_s,u_s) - b_x(x_s,u_s)b_u(x_s,u_s)) \: dt \: v_s  \: ds \\
		&+\mathbb{E} \int_0^T \int_s^T D_sL_{x}(x_t,u_t,t) \frac{D_sx_t}{b(x_s,u_s)} b_u(x_s,u_s) \: dt \: v_s  \: ds
		+\mathbb{E} \int_0^T L_u(x_t,u_t,t)  v_t dt.
		\end{align*}
		where we used the chain rule $D_s L_x(x_t,u_t,t) = L_{xx}(x_t,u_t,t) D_sx_t +  L_{xu}(x_t,u_t,t) D_su_t$.
		Therefore, the statement proves.
	\end{proof}
\end{theorem}

\section{Proof of \cref{Thm2}}
\label{App2}

To prove \cref{Thm2}, we need some founding Lemmas.
Before that we need to first recall the Stein's lemma that for zero-mean jointly Gaussian random variables 
$\textbf{x}\in\mathbb{R}^n$ and $\textbf{y}\in\mathbb{R}^n$,
and a vector-valued continuously differentiable function $F:\mathbb{R}^n \to \mathbb{R}^n$, we can get:
\begin{align}\label{Eq:SteinLemma}
	\mathbb{E}\left\{ F(\textbf{x})^\top \textbf{y} \right\} 
	= {\rm tr}\left\{ \mathbb{E}\left\{ \mathcal{J}_\textbf{x}F(\textbf{x}) \right\} \mathbb{E}\left\{\textbf{x}\textbf{y}^\top \right\} \right\}
\end{align}

\begin{lemma}
	\label{lemma:integation-by-parts}
	Suppose $F:\mathbb{R}^n \to \mathbb{R}^n$ be a vector-valued random function in the domain of Malliavin derivative,
	$\textbf{X}=(\textbf{x}_t)_{t}$ be a Wiener-driven and Malliavin differentiable process, 
	and $\{ \textbf{G}^l=(\textbf{g}^l_t)_t\}_{l=1}^d$ with $\textbf{g}^l_t \in \mathbb{R}^n$ 
	be an adapted and square-integrable process, we then get:
	$$
	\mathbb{E}\left\{ F(\textbf{x}_t)^\top \sum_{l=1}^d \int_0^t \textbf{g}^l_s dw_s^l \right\} 
	= \sum_{l=1}^d \int_0^t \mathbb{E}\left\{ D_s^l F(\textbf{x})^\top \:  \textbf{g}^l_s \right\} ds
	= \sum_{l=1}^d \int_0^t \mathbb{E}\left\{ {D_s^l\textbf{x}_t}^\top \mathcal{J}_\textbf{x}F(\textbf{x})^\top\:  \textbf{g}^l_s \right\} ds,
	$$
	where $D_s^l\textbf{x}_t$ is the Malliavin derivative of $\textbf{x}_t$ against the Wiener process $w_t^l$.
	\begin{proof}
		This theorem is the multivariate extension of integration-by-parts formula (see \cref{Sec:MalliavinDerivative}) 
		in the Malliavin calculus.
		Without loss of generality, we provide a proof for the case of
		$\textbf{x}_t = \int_0^t \sum_{l=1}^d \textbf{h}_s^l dw_s^l$.
		The proof can be then generalized for the other cases \citep{nualart2018introduction,alos2021malliavin}.
		Let $\mathcal{F}^{\textbf{h},\textbf{g}} $ be the $\sigma$-algebra of processes $\textbf{h}^l$ and $\textbf{g}^l$
		that encodes all the information of these process up to time $t$.
		We then get:
		\begingroup\makeatletter\def\f@size{9.5}\check@mathfonts
		\begin{align*}
			&\mathbb{E} \left\{ F(\textbf{x}_t)^\top \sum_{l=1}^d \int_0^t \textbf{g}^l_s dw_s^l \right\} 		
			=\mathbb{E}_{\mathcal{F}^{\textbf{h},\textbf{g}} } \mathbb{E}\left\{ F(\textbf{x}_t)^\top \sum_{l=1}^d \int_0^t \textbf{g}_s^l dw_s^l  ~\Big|~ \mathcal{F}^{\textbf{h},\textbf{g}} \right\}\\
			&\hspace{60pt} \overset{(a)}{=} \mathbb{E}_{\mathcal{F}^{\textbf{h},\textbf{g}}}\:{\rm tr}\left\{  
			\sum_{l=1}^d \mathbb{E}\left\{ \mathcal{J}_\textbf{x}F \:\big|\: \mathcal{F}^{\textbf{h},\textbf{g}} \right\} 
			\mathbb{E}\left\{ \int_0^t \textbf{h}_s^l {\textbf{g}_s^l}^\top ds  \:\Big|\: \mathcal{F}^{\textbf{h},\textbf{g}} \right\}\right\}\\
			&\hspace{60pt} = \mathbb{E}_{\mathcal{F}^{\textbf{h},\textbf{g}}}\:{\rm tr}\left\{  
			\sum_{l=1}^d \mathbb{E}\left\{ \mathcal{J}_\textbf{x}F \:\big|\: \mathcal{F}^{\textbf{h},\textbf{g}} \right\} 
			\int_0^t \textbf{h}_s^l {\textbf{g}_s^l}^\top ds  \right\}\\
			&\hspace{60pt} = \mathbb{E}_{\mathcal{F}^{\textbf{h},\textbf{g}}}\:{\rm tr}\left\{  
			\sum_{l=1}^d \mathbb{E}\left\{ \mathcal{J}_\textbf{x}F \: \int_0^t \textbf{h}_s^l {\textbf{g}_s^l}^\top ds \:\Big|\: \mathcal{F}^{\textbf{h},\textbf{g}} \right\} 	\right\}\\
			&\hspace{60pt} \overset{(b)}{=}  \mathbb{E}_{\mathcal{F}^{\textbf{h},\textbf{g}}}\:\sum_{l=1}^d \int_0^t \mathbb{E}\left\{ {\textbf{g}_s^l}^\top\mathcal{J}_{\textbf{x}}F\:D_s^l\textbf{x}_t \:\Big|\: \mathcal{F}^{\textbf{h},\textbf{g}}\right\} ds
			= \sum_{l=1}^d \int_0^t \mathbb{E}\left\{ {\textbf{g}_s^l}^\top\mathcal{J}_{\textbf{x}}F\: D_s^l\textbf{x}_t \right\} ds
		\end{align*}
		\endgroup
		where (a) obtained based on the Stein's lemma~(\ref{Eq:SteinLemma}), and 
		we used ${\rm tr}\{\textbf{A} \textbf{z}\textbf{z}^\top \} = {\rm tr}\{\textbf{z}^\top \textbf{A} \textbf{z}\}$ 
		for (b) with vector $\textbf{z}$ and matrix $\textbf{A}$.
	\end{proof}
\end{lemma}

\begin{lemma}\label{Lemma2}
	Consider the diffusion process $\textbf{X}=(\textbf{x}_t)_{t\in[0,T]}$ following the SDE~(\ref{Eq:SDE}), i.e.,
	$$
	d\textbf{x}_t = \textbf{a}(\textbf{x}_t,\textbf{u}_t)dt+ \sum_{l=1}^d \textbf{b}_l(\textbf{x}_t,\textbf{u}_t) dw_t^l.
	$$
	The solution of the variational SDE
	\begin{align}
		\label{Eq:Aux4}
		d\boldsymbol{\delta}_t = \big( \mathcal{J}_\textbf{x}\textbf{a}(\textbf{x}_t,\textbf{u}_t)\boldsymbol{\delta}_t 
		+  \mathcal{J}_\textbf{u}\textbf{a}(\textbf{x}_t,\textbf{u}_t)\textbf{v}_t \big)dt + \sum_{l=1}^d\big( \mathcal{J}_\textbf{x}\textbf{b}_l(\textbf{x}_t,\textbf{u}_t)\boldsymbol{\delta}_t +  \mathcal{J}_\textbf{u}\textbf{b}_l(\textbf{x}_t,\textbf{u}_t)\textbf{v}_t \big)dw_t^l.
	\end{align}
	with $\boldsymbol{\delta}_0=\textbf{0}$, is then expressed by
	$$
	\boldsymbol{\delta}_t = \int_0^t \boldsymbol{\Gamma}_{s,t}\Big( \mathcal{J}_\textbf{u}\textbf{a}(\textbf{x}_s,\textbf{u}_s)
	-\sum_{l,l'}^d q_{l,l'}\mathcal{J}_\textbf{x}\textbf{b}_l(\textbf{x}_s,\textbf{u}_s)  \mathcal{J}_\textbf{u}\textbf{b}_{l'}(\textbf{x}_s,\textbf{u}_s) \Big)\:\textbf{v}_s\:ds 
	+\int_0^t \sum_{l=1}^d \boldsymbol{\Gamma}_{s,t} \mathcal{J}_\textbf{u} \textbf{b}_l(\textbf{x}_s,\textbf{u}_s)\:\textbf{v}_s\:dw_s^l,
	$$
	where $q_{l,l'}$ is the diffusion coefficient between $w_t^l$ and $w_t^{l'}$,
	and $\boldsymbol{\Gamma}_{s,t} = \textbf{Y}_t\textbf{Z}_s$ follows the stochastic flows based on \cref{Eq:Y-Z}.
	\begin{proof}
		For the ease of notations, we hereafter denote the arguments of functions, i.e. $(\textbf{x}_t,\textbf{u}_t)$ by merely $(t)$.
		We  then consider a matrix SDE with the form of
		\begin{align}
			\label{Eq:Aux5}
			d\Phi_t =  \mathcal{J}_\textbf{x}\textbf{a}(t) \Phi_t  \: dt +  \sum_{l=1}^d \mathcal{J}_\textbf{x}\textbf{b}_l(t)\Phi_t \: dw_t^l,\quad\Phi_0=\textbf{I},
		\end{align}
		where $\Phi_t \in \mathbb{R}^{n\times n}$ is assumed to be invertible.
		Notice that the solution for $\Phi_t$ that meets all of these conditions
		is the stochastic flow $\textbf{Y}_t$~(\ref{Eq:Y-Z}).
		Based on the variation-of-constant approach, 
		the solution of $\boldsymbol{\delta}_t$ in (\ref{Eq:Aux4}) can be obtained using 
		\begin{align}\label{Eq:constant-of-variation2}
			\boldsymbol{\delta}_t = \Phi_t \textbf{q}_t,
		\end{align}
		where vector $\textbf{q}_t \in \mathbb{R}^n$ should be found.
		For $\mathcal{J}_\textbf{u}\textbf{b}_l = 0$, one can simply
		show that $\Phi_t\:d\textbf{q}_t = \mathcal{J}_\textbf{u}\textbf{a}(t)\textbf{v}_t\:dt 
		+ \sum_{l=1}^d \mathcal{J}_\textbf{u}\textbf{b}_l(t)\textbf{v}_t \:dw_t^l$.
		However, in the case of $\mathcal{J}_\textbf{u}\textbf{b}_l \ne 0$, 
		we should set the following based on the Lemma~(\ref{Lemma1}): 
		\begin{align}
			\label{Eq:Aux6}
			\Phi_t\:d\textbf{q}_t = (\mathcal{J}_\textbf{u}\textbf{a}(t)\textbf{v}_t+ \Psi_t)\:dt 
			+ \sum_{l=1}^d \mathcal{J}_\textbf{u}\textbf{b}_l(t)\textbf{v}_t \:dw_t^l,
		\end{align}
		where $\Psi_t \in \mathbb{R}^{n \times n}$ now should be determined 
		so that the dt-terms match after including the quadratic covariation.
		To obtain it, we take differential of \cref{Eq:constant-of-variation2}:
		$$
		d\boldsymbol{\delta}_t = d\Phi_t\:\textbf{q}_t + \Phi_t\:d\textbf{q}_t + d\Phi_t\:d\textbf{q}_t,
		$$
		which accordingly gives the following based on the \cref{Eq:Aux4,Eq:Aux5,Eq:Aux6}:
		\begin{align*}
			&\big( {\color{purple}\cancel{\mathcal{J}_\textbf{x}\textbf{a}(t)\:\boldsymbol{\delta}_t}} 
			+      {\color{darkred}\cancel{\mathcal{J}_\textbf{u}\textbf{a}(t)\:\textbf{v}_t}} \big)dt 
			+ \sum_{l=1}^d \big( {\color{blue}\cancel{\mathcal{J}_\textbf{x}\textbf{b}_l(t)\boldsymbol{\delta}_t dw_t^l}} 
			+              {\color{darkgreen}\cancel{\mathcal{J}_\textbf{u}\textbf{b}_l(t)\textbf{v}_t dw_t^l }} ~\big)\\
			&\qquad=  \underbrace{\Big( \mathcal{J}_\textbf{x}\textbf{a}(t) \Phi_t  \: dt +  \sum_{l=1}^d \mathcal{J}_\textbf{x}\textbf{b}_l(t)\Phi_t \: dw_t^l \Big)\textbf{q}_t}_{d\Phi_t\textbf{q}_t}
			+ \underbrace{(\mathcal{J}_\textbf{u}\textbf{a}(t)\textbf{v}_t+ \Psi_t)\:dt 
			+ \sum_{l=1}^d \mathcal{J}_\textbf{u}\textbf{b}_l(t)\textbf{v}_t \:dw_t^l}_{\Phi_td\textbf{q}_t}\\
			&\qquad~~ + \underbrace{\sum_{l=1}^d \sum_{l'=1}^d \mathcal{J}_\textbf{x}\textbf{b}_l(t)\Phi_t \: \Phi_t^{-1}  \mathcal{J}_\textbf{u}\textbf{b}_{l'}(t) \textbf{v}_t \: dw_t^l \: dw_t^{l'} }_{d\Phi_t d\textbf{q}_t} \\
			&\qquad \overset{(a)}{=}  \Big( {\color{purple}\cancel{\mathcal{J}_\textbf{x}\textbf{a}(t) \boldsymbol{\delta}_t  \: dt}} 
			+  \sum_{l=1}^d {\color{blue}\cancel{ \mathcal{J}_\textbf{x}\textbf{b}_l(t)\boldsymbol{\delta}_t \: dw_t^l}} \Big)
			+ ( {\color{darkred}\cancel{\mathcal{J}_\textbf{u}\textbf{a}(t)\textbf{v}_t}}+ \Psi_t)\:dt 
			+ \sum_{l=1}^d {\color{darkgreen}\cancel{\mathcal{J}_\textbf{u}\textbf{b}_l(t)\textbf{v}_t \:dw_t^l}}\\
			&\qquad~~ + \sum_{l=1}^d \sum_{l'=1}^d \mathcal{J}_\textbf{x}\textbf{b}_l(t)  \mathcal{J}_\textbf{u}\textbf{b}_{l'}(t) \textbf{v}_t \: q_{l,l'} dt,
		\end{align*}
		where for (a), we used $dw_t^l dw_t^{l'}=q_{l,l'}dt$ and $\boldsymbol{\delta}_t = \Phi_t \textbf{q}_t$ based on \cref{Eq:constant-of-variation2}.
		By canceling the similar terms, we then obtain:
		$$
		\Psi_t =-\sum_{l,l'}^d q_{l,l'}\mathcal{J}_\textbf{x}\textbf{b}_l(t)  \mathcal{J}_\textbf{u}\textbf{b}_{l'}(t) \textbf{v}_t.
		$$
		Plugging the recent equation into \cref{Eq:Aux6},
		utilization of \cref{Eq:constant-of-variation2}
		and considering that $\textbf{q}_0 =\textbf{0}$ as $\boldsymbol{\delta}_t=\textbf{0}$ and $\Phi_0$ is invertible, thus gives:
		\begingroup\makeatletter\def\f@size{9.5}\check@mathfonts
		\begin{align}
			\label{Eq:delta_vector}
			\boldsymbol{\delta}_t = \Phi_t \textbf{q}_t = \int_0^t \Phi_t\Phi_s^{-1} \Big( \mathcal{J}_\textbf{u}\textbf{a}(s)
			-\sum_{l,l'}^d q_{l,l'}\mathcal{J}_\textbf{x}\textbf{b}_l(s)  \mathcal{J}_\textbf{u}\textbf{b}_{l'}(s) \Big)\:\textbf{v}_s\:ds 
			+\int_0^t \sum_{l=1}^d \Phi_t\Phi_s^{-1} \mathcal{J}_\textbf{u}\textbf{b}_l(s)\:\textbf{v}_s\:dw_s^l.
		\end{align}
		\endgroup
		As declared, $\Phi_t$ is equal to the stochastic flow $Y_t$~(\ref{Eq:Y-Z}).
		Hence, based on \cref{Eq:MatrixMalliavin}, we can get:
		$$
		\Phi_t\Phi_s^{-1}  = \boldsymbol{\Gamma}_{s,t},
		$$
		By substituting this into \cref{Eq:delta_vector}, the statement thus follows.
	\end{proof}
\end{lemma}

We now present \cref{Thm2} with a proof.
Note that without loss of generalization, we assume that $h(\textbf{x}_T) = 0$.
\begin{theorem}\label{Thm2_app}
	Consider SOCP $\mbox{P}_1$ (\ref{EQ:StochasticControl}),
	the variation of cost functional $J(\textbf{u},\textbf{x}_0)$
	along direction $\textbf{v}$, through Definition~(\ref{Def1}), can be obtained by:
	\begingroup\makeatletter\def\f@size{9.}\check@mathfonts
	\begingroup\makeatletter\def\f@size{9}\check@mathfonts
	\begin{align*}
		\delta_\textbf{v} J(\textbf{u},\textbf{x}_0) 
		= \mathbb{E} \!\int_0^T \! 
		\bigg\{ \!&
		\Big( \int_s^T \!  \nabla_\textbf{x}L(t)^\top \boldsymbol{\Gamma}_{s,t} \: dt+ \nabla_\textbf{x}h(\textbf{x}_T)^\top \boldsymbol{\Gamma}_{s,T} \Big) \Big( \mathcal{J}_\textbf{u}\textbf{a}(s)
		- \sum_{l,l'}^d q_{l,l'}\mathcal{J}_\textbf{x}\textbf{b}_l(s)  \mathcal{J}_\textbf{u}\textbf{b}_{l'}(s) \Big) \\
		&+ \sum_{l=1}^d 
		\Big(
		\int_s^T \!  \big({D_s^l\textbf{x}_t}^\top  \nabla_{\textbf{x}}^2L(t) + {D_s^l\textbf{u}_t}^\top  \nabla_{\textbf{u}\textbf{x}}L(t)\big) \boldsymbol{\Gamma}_{s,t} dt
		+ {D_s^l\textbf{x}_T}^\top \nabla_{\textbf{x}}^2h(\textbf{x}_T) \boldsymbol{\Gamma}_{s,T}
		\Big) 
		\:\mathcal{J}_\textbf{u}\textbf{b}_l(s) \\
		&+ \nabla_\textbf{u}L(s)^\top 
		\bigg\} \textbf{v}_s ds,
	\end{align*}
	\endgroup
	\endgroup
	where $\textbf{B} = [\textbf{b}_1,...,\textbf{b}_d]$,
	$\boldsymbol{\Gamma}_{s,t}$ relates to the Malliavin derivative $D_s\textbf{x}_t$ though \cref{Eq:MatrixMalliavin},
	$q_{l,l'}$ is the diffusion coefficient between Wiener processes $w_t^l$ and $w_t^{l'}$,
	$[\textbf{A}]_l$ outputs the $l$-th column of matrix $\textbf{A}$,
	and $\mathcal{J}_\textbf{x}\textbf{a}$ (respectively, $\mathcal{J}_\textbf{u}\textbf{a}$) stands for the Jacobian matrix of vector $\textbf{a}$ 
	w.r.t. $\textbf{x}$ (respectively, $\textbf{u}$).
	
	\begin{proof}
		We perturb the control decision $\textbf{u}_t$ through $\textbf{u}_t+  \epsilon \textbf{v}_t$, 
		which leads to the diffusion process $\textbf{x}_t^{\textbf{u}_t+  \epsilon \textbf{v}_t}$.
		We then define a variational process:
		$$
		\boldsymbol{\delta}_t^x := \lim_{\epsilon \to 0}\frac{1}{\epsilon}(\textbf{x}_t^{\textbf{u}+  \epsilon \textbf{v}} - \textbf{x}_t^{\textbf{u}}).
		$$
		Based on \cref{Eq:SDE}, $\boldsymbol{\delta}_t^x $ obeys the following variational SDE:
		\begin{align}\label{Eq:vec-variationa_x}
		d\boldsymbol{\delta}_t^x = \big( \mathcal{J}_\textbf{x}\textbf{a}(t)\boldsymbol{\delta}_t^x 
		+  \mathcal{J}_\textbf{u}\textbf{a}(t)\textbf{v}_t \big)dt 
		+ \sum_{l=1}^d\big( \mathcal{J}_\textbf{x}\textbf{b}_l(t)\boldsymbol{\delta}_t^x +  \mathcal{J}_\textbf{u}\textbf{b}_l(t)\textbf{v}_t \big)dw_t^l.
		\end{align}
		Using Lemma (\ref{Lemma2}), the solution of this variational SDE is:
		\begin{align}\label{Eq:vec-solution-delta_x}
			\boldsymbol{\delta}_t^x = \int_0^t \boldsymbol{\Gamma}_{s,t} 
			\Big( \mathcal{J}_\textbf{u}\textbf{a}(s)- \sum_{l,l'}^d q_{l,l'}\mathcal{J}_\textbf{x}\textbf{b}_l(s)  \mathcal{J}_\textbf{u}\textbf{b}_{l'}(s)\Big)\:\textbf{v}_s\:ds 
			+\int_0^t \sum_{l=1}^d \boldsymbol{\Gamma}_{s,t} \mathcal{J}_\textbf{u}\textbf{b}_l(s)\:\textbf{v}_s\:dw_s^l.
		\end{align}
		On the other hand, the variation of cost functional $J(\textbf{u},\textbf{x}_0)$ along direction $\textbf{v}$ obeys:
		\begin{align}\label{Eq:vec-variation-of-cost}
			\delta_\textbf{v}J(\textbf{u},\textbf{x}_0) = \mathbb{E}\Big\{ \int_0^T  \big(\nabla_\textbf{x}L(t)^\top \boldsymbol{\delta}_t^x + \nabla_\textbf{u}L(t) \textbf{v}_t\big) dt\Big\} .
		\end{align}
		By plugging \cref{Eq:vec-solution-delta_x} into \cref{Eq:vec-variation-of-cost}, we then have:
		\begin{align*}
			\delta_\textbf{v}J(\textbf{u},\textbf{x}_0) =& \mathbb{E} \int_0^T \nabla_\textbf{x}L(t)^\top \int_0^t \boldsymbol{\Gamma}_{s,t} \Big( \mathcal{J}_\textbf{u}\textbf{a}(s)- \sum_{l,l'}^d q_{l,l'}\mathcal{J}_\textbf{x}\textbf{b}_l(s)  \mathcal{J}_\textbf{u}\textbf{b}_{l'}(s)\Big) \textbf{v}_s ds \: dt \\
			&+\mathbb{E} \int_0^T \sum_{l=1}^d \nabla_{\textbf{x}}L(t)^\top \int_0^t \boldsymbol{\Gamma}_{s,t} \mathcal{J}_\textbf{u}\textbf{b}_l(s) \textbf{v}_s dw_s^l \: dt			      
			+\mathbb{E} \int_0^T \mathcal{J}_\textbf{u}L(t)^\top  \textbf{v}_t dt.
		\end{align*}
		We then apply the integration-by-parts Lemma~(\ref{lemma:integation-by-parts}) for the second term of RHS of the recent equation:
		\begin{align*}
			\delta_\textbf{v}J(\textbf{u},\textbf{x}_0) =& \mathbb{E} \int_0^T \int_0^t \nabla_\textbf{x}L(t)^\top \boldsymbol{\Gamma}_{s,t} \Big( \mathcal{J}_\textbf{u}\textbf{a}(s)- \sum_{l,l'}^d q_{l,l'}\mathcal{J}_\textbf{x}\textbf{b}_l(s)  \mathcal{J}_\textbf{u}\textbf{b}_{l'}(s)\Big) \:\textbf{v}_s \: ds \: dt \\
			&+\mathbb{E} \int_0^T \int_0^t \sum_{l=1}^d 
			\big(
			{D_s^l\textbf{x}_t}^\top \nabla_{\textbf{x}}^2L(t) + {D_s^l\textbf{u}_t}^\top \nabla_{\textbf{u}\textbf{x}}L(t)
			\big) \boldsymbol{\Gamma}_{s,t} \mathcal{J}_\textbf{u}\textbf{b}_l(s) \: \textbf{v}_s \: ds \: dt \\
			&+\mathbb{E} \int_0^T \mathcal{J}_\textbf{u}L(t)^\top  \textbf{v}_t dt,
		\end{align*}
		where $D_s^l\textbf{x}_t$ (respectively, $D_s^l\textbf{u}_t$) is the Malliavin derivative of $\textbf{x}_t$ (respectively, $\textbf{u}_t$) 
		against $w_t^l$.		
		Changing the order of integrals then gives:
		\begin{align*}
			\delta_\textbf{v}J(\textbf{u},\textbf{x}_0) =& \mathbb{E} \int_0^T \int_s^T \nabla_\textbf{x}L(t)^\top \boldsymbol{\Gamma}_{s,t} \Big( \mathcal{J}_\textbf{u}\textbf{a}(s)- \sum_{l,l'}^d q_{l,l'}\mathcal{J}_\textbf{x}\textbf{b}_l(s)  \mathcal{J}_\textbf{u}\textbf{b}_{l'}(s)\Big)  \: dt \: \textbf{v}_s \: ds \\
			&+\mathbb{E} \int_0^T \int_s^T \sum_{l=1}^d 
			\big(
			{D_s^l\textbf{x}_t}^\top \nabla_{\textbf{x}}^2L(t) + {D_s^l\textbf{u}_t}^\top \nabla_{\textbf{u}\textbf{x}}L(t)
			\big) \boldsymbol{\Gamma}_{s,t} \mathcal{J}_\textbf{u}\textbf{b}_l(s)  \: dt \: \textbf{v}_s \: ds \\
			&+\mathbb{E} \int_0^T \mathcal{J}_\textbf{u}L(t)^\top  \textbf{v}_t dt.
		\end{align*}
		According to \cref{Eq:MatrixMalliavin}, we have $D_s^l\textbf{x}_t = [\boldsymbol{\Gamma}_{s,t}\textbf{B}(s)]_l$.
		This thus proves statement.
	\end{proof}
\end{theorem}

\section{proof of Proposition \ref{Propos:BackwardSDE}}\label{App3}
 
According to \cref{Eq:MalliavinDerv0}, the Malliavin derivative $D_r\textbf{x}_t$ -- \emph{for a given $r$ and as a function of $t$} -- is a diffusion process adapted to the filtration $\mathcal{F}_r$. 
Therefore, based on \cref{Eq:MatrixMalliavin}, the process $\boldsymbol{\Gamma}_{r,t} := \textbf{Y}_t \textbf{Z}_r$, for a given $r$ is adapted to $\mathcal{F}_r$.
On the other hand, according to \cref{Eq:Y-Z} we can obtain
\begin{align*}
d \boldsymbol{\Gamma}_{r,t} &= d\:\textbf{Y}_t \: \textbf{Z}_r 
= \mathcal{J}_\textbf{x}\textbf{a}(\textbf{x}_t,\textbf{u}_t)\textbf{Y}_t \textbf{Z}_r dt
+ \sum_{l=1}^d \mathcal{J}_\textbf{x}\textbf{b}_l(\textbf{x}_t,\textbf{u}_t)\textbf{Y}_t\textbf{Z}_r dw_t^l \\
&= \Bigg( \sum_{l=1}^d \mathcal{J}_{\textbf{x}}\textbf{b}_l(\textbf{x}_t,\textbf{u}_t)\: dw_t^l 
+ \mathcal{J}_{\textbf{x}}\textbf{a}(\textbf{x}_t,\textbf{u}_t) dt  \Bigg)\boldsymbol{\Gamma}_{r,t}.
\end{align*}
Therefore, statement follows.

\end{document}